\documentclass[leqno,12pt]{amsart} 
\usepackage[top=30truemm,bottom=30truemm,left=25truemm,right=25truemm]{geometry}
\usepackage{amssymb}
\usepackage{amsmath}
\usepackage{amsthm}
\usepackage{amscd}
\usepackage{mathrsfs}
\usepackage{graphicx}
\usepackage{color}
\usepackage{url}
\usepackage{enumitem}
  \makeatletter

\@addtoreset{equation}{section}
\@namedef{subjclassname@2020}{\textup{2020} Mathematics Subject Classification}
\makeatother
\theoremstyle{plain} 
\newtheorem{theorem}{\indent\bf Theorem}[section]
\newtheorem{lemma}[theorem]{\indent\bf Lemma}
\newtheorem{corollary}[theorem]{\indent\bf Corollary}

\theoremstyle{definition} 
\newtheorem{definition}[theorem]{\indent\bf Definition}
\newtheorem{remark}[theorem]{\indent\bf Remark}

\newcommand{\ddbar}{\partial \overline{\partial}}

\newcommand{\ai}{\sqrt{-1}}
\newcommand{\llangle}{\langle\!\langle}
\newcommand{\rrangle}{\rangle\!\rangle}

\newcommand{\R}{\mathbb{R}}

\newcommand{\C}{\mathbb{C}}

\newcommand{\Z}{\mathbb{Z}}
\newcommand{\B}{\mathbb{B}}

\newcommand{\Id}{\mathrm{Id}}

\newcommand{\sasyugo}{\setminus\!}

\begin{document}
\pagestyle{plain}
\thispagestyle{plain}

\title[Asymptotic expansions of $L^2$-extension indices and curvature positivity]
{Asymptotic expansions of $L^2$-extension indices and curvature positivity}

\author[T. INAYAMA]{Takahiro INAYAMA}
\address{Department of Mathematics\\
	College of Science\\
	Rikkyo University\\
	3-34-1, Nishi-Ikebukuro, Toshima-ku\\
	Tokyo, 171-8501\\
	Japan
}
\email{5073974@rikkyo.ac.jp}
\email{inayama570@gmail.com}
\subjclass[2020]{32A36, 32U05}
\keywords{ 
	Ohsawa--Takegoshi extension theorem, $L^2$-extension index, Griffiths positivity, $q$-positivity%
}

\begin{abstract}
	In this paper, we prove an asymptotic expansion of the $L^2$-extension index of a smooth Hermitian metric on a holomorphic vector bundle, in which the Chern curvature appears as the second-order coefficient.
	By using this expansion, we show in a unified way that there is an equivalence between how sharp the $L^2$-extension is and how positive or negative the curvature is.
	We also introduce a new notion of $q$-$L^2$-extension indices and investigate partial positivity and flatness in terms of these indices.
\end{abstract}


\maketitle
\setcounter{tocdepth}{2}

\section{Introduction}\label{sec:intro}

The Ohsawa--Takegoshi $L^2$-extension theorem \cite{OT87} is a fundamental theorem concerning the $L^2$-extension of holomorphic functions.
After B\l ocki \cite{Blo13} and Guan--Zhou \cite{GZ15} established the optimal Ohsawa--Takegoshi $L^2$-extension theorem, the notion called the minimal extension property or the optimal $L^2$-extension property has been studied in various contexts (see \cite{HPS18}, \cite{DNW21}, \cite{DNWZ23}).
In many settings, this property is known to be equivalent to the semipositivity of the curvature of a metric.
It is also known that, if we allow the constant in the $L^2$-estimate of the Ohsawa--Takegoshi extension theorem to depend on the weight, this estimate can be sharpened (see \cite{Hos19}, \cite{Kik22}).
In light of these results, it is natural to ask how the sharpness of the $L^2$-extension is related to the positivity of the curvature.
To study this question quantitatively, the author introduced the notion of $L^2$-extension indices in \cite{Ina26}.
The $L^2$-extension index is a function that gives the minimal constant with respect to the $L^2$-estimate of an Ohsawa--Takegoshi-type extension at each point.
In \cite{Ina26}, it was proved that there is an equivalence between how sharp the $L^2$-extension is and how positive the curvature is.
Subsequently, related characterizations of Griffiths positivity, pluriharmonicity and flatness were obtained by Liu--Xu \cite{LX24}.

The aim of this paper is to show that these characterizations all stem from a single asymptotic expansion: the $L^2$-extension index admits a second-order expansion whose coefficients are given by the Chern curvature.
Once the expansion is established, characterizations of positivity, negativity and flatness of the curvature follow immediately and in a unified way.

First, we recall the definition of $L^2$-extension indices for smooth Hermitian vector bundles.

\begin{definition}[{\cite[(A.1)]{Ina26}}]\label{def:l2indexvector}
	Let $\pi:E\to \Omega$ be a holomorphic vector bundle over $\Omega\subset \C^n$ and $h$ be a smooth Hermitian metric on $E$.
	We set $\Omega_P= \{ (a,r,s,A)\in \Omega \times \R_{>0}\times \R_{>0}\times\mathbf{U}(n) \mid a + P_{r,s,A}\subset \Omega \}$, where $\mathbf{U}(n)$ is the unitary group of degree $n$ and $P_{r,s,A}$ is a holomorphic cylinder defined by $P_{r,s,A} = A(\Delta_r\times \B^{n-1}_s)$ for $r,s>0$.
	We define {\it the $L^2$-extension index} $L_h$ of $h$ on $\Omega_P\times_\Omega E\sasyugo \{0\} = \{(a,r,s,A,\xi)\in \Omega_P\times E\sasyugo \{0\}\mid a = \pi(\xi) \}$ by
	$$
		L_h(a,r,s,A,\xi)=\inf\left\{ \frac{\int_{a+P_{r,s,A}}|s|^2_h}{|P_{r,s,A}| |\xi|^2_{h(a)}} ~\middle|~ s\in A^2(a+P_{r,s,A},E; h), s(a)=\xi \right\}.
	$$
	Here $A^2(a+P_{r,s,A},E; h)$ denotes the space of square integrable holomorphic sections of $E$ on $a+P_{r,s,A}$ with respect to $h$.
\end{definition}

The main theorem of this paper is the following expansion formula.

\begin{theorem}\label{mainthm:Griffithsexpansion}
	For fixed $a\in \Omega$ and $\xi \in E_a\sasyugo \{0\}$, we have the following expansion
	$$
		L_{h}(a,r,s,A, \xi) = 1 - \frac{r^2}{2} \frac{\langle \Theta_h(Ae_1, \overline{Ae_1})\xi, \xi\rangle_{h(a)}}{|\xi|^2_{h(a)}} - \frac{s^2}{n} \sum_{j=2}^n \frac{\langle \Theta_h(Ae_j, \overline{Ae_j})\xi, \xi \rangle_{h(a)}}{|\xi|^2_{h(a)}} + O((r^2+s^2)^2),
	$$
	where $\{ e_1,\dots,e_n\}$ denotes the standard basis of $\mathbb{C}^n$.
\end{theorem}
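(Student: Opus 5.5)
Fix $a$, $\xi$ and $A$. After the translation $z\mapsto a+Az$ we may assume $a=0$ and $A=\Id$, so that $P=P_{r,s}=\Delta_r\times\B^{n-1}_s$; the curvature terms then transform into $\Theta_h(Ae_j,\overline{Ae_j})$. Next we choose a holomorphic frame of $E$ near $0$ that is normal at $0$: the metric matrix satisfies $H(0)=I$ and $dH(0)=0$. Explicitly,
\begin{equation*}
H(z)=I-\sum_{j,k}c_{j\bar k}z_j\bar z_k+Q(z)+\overline{Q(z)}+O(|z|^3),
\end{equation*}
where $c_{j\bar k}=\Theta_h(e_j,\overline{e_k})$ at $0$ (up to the paper's sign convention for $\Theta_h=\bar\partial(H^{-1}\partial H)$) and $Q$ collects the purely holomorphic quadratic terms. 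The pure terms $z_jz_k$ can be removed by a further holomorphic change of frame $I+O(z^2)$, so we may assume $Q=0$. The problem is now to compute, up to $O((r^2+s^2)^2)$, the minimum of $\frac{1}{|P|}\int_P \langle H f,f\rangle$ over holomorphic $\C^N$-valued $f$ with $f(0)=\xi$, and we normalize $|\xi|=1$.

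\emph{Upper bound.} Take the constant section $f\equiv\xi$. Its normalized integral is
\begin{equation*}
1-\sum_{j,k}\langle c_{j\bar k}\xi,\xi\rangle\,\frac{1}{|P|}\int_P z_j\bar z_k+O\big((r^2+s^2)^{3/2}\big).
\end{equation*}
By circular symmetry of $P$ in each coordinate direction, the off-diagonal moments vanish. The diagonal moments are $\frac{1}{|\Delta_r|}\int_{\Delta_r}|z_1|^2=r^2/2$ and $\frac{1}{|\B_s|}\int_{\B_s^{n-1}}|z_j|^2=s^2/n$ (in real dimension $2(n-1)$ the average of $|w|^2$ is $\frac{n-1}{n}s^2$, split evenly over the $n-1$ coordinates). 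These give exactly the stated coefficients.

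The cubic error needs care to get $O((r^2+s^2)^2)$. Odd-degree terms integrate to zero over $P$ whenever they are not invariant under the rotations $z_j\mapsto e^{i\theta}z_j$. The remaining cubic terms are of the form $z_jz_k\bar z_l$ or its conjugate, and these also fail to be rotation invariant: the count of $z$ minus $\bar z$ is $\pm1$. So all cubic terms vanish after averaging, and the error is $O(|z|^4)$.

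\emph{Lower bound.} This is the main obstacle: we must show that nonconstant corrections cannot improve the value beyond $O((r^2+s^2)^2)$. Write a competitor as $f=\xi+g$ with $g(0)=0$, and expand $g=\sum_{\alpha\neq0}g_\alpha z^\alpha$. The monomials $z^\alpha$ are orthogonal on $P$, so
\begin{equation*}
\frac{1}{|P|}\int_P |f|^2=1+\frac{1}{|P|}\int_P|g|^2.
\end{equation*}
The cross terms involving $H-I$ are handled as follows:
\begin{itemize}
\item The term $\frac{1}{|P|}\int_P\langle (H-I)\xi,g\rangle$ pairs the quadratic part $z_j\bar z_k$ with $\bar z^\alpha$ and vanishes unless $\alpha=e_j+\dots$. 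Rotation invariance forces $|\alpha|=1$ with a further $\bar z$, which again vanishes. The only surviving contributions come from order-$\ge3$ terms of $H$, giving a bound $\lesssim \rho^3\|g\|$, where $\rho^2=r^2+s^2$ and $\|g\|$ is the normalized $L^2$ norm.
\item The term $\frac{1}{|P|}\int_P\langle (H-I)g,g\rangle$ is bounded by $O(\rho^2)\|g\|^2$.
\end{itemize}
Hence the quantity is at least
\begin{equation*}
\text{(upper-bound value)}+\|g\|^2(1-C\rho^2)-C\rho^3\|g\|\ \ge\ \text{value}-C'\rho^6.
\end{equation*}

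One caveat is that $r$ and $s$ may be very unequal, so the cube of $|z|$ must be measured anisotropically. I would track each monomial's scaling in $r$ and $s$ separately and use the Cauchy–Schwarz bound $\|z^\beta\|_{L^2(P)}/|P|^{1/2}\lesssim r^{\beta_1}s^{|\beta'|}$. The Taylor remainder $O(|z|^3)$ is uniformly bounded on $P$ by $C\rho^3$, which suffices. Combining the upper and lower bounds yields the expansion.
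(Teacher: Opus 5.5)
Your proof is correct, but it is organized differently from the paper's.

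\textbf{The paper's route.} The paper works with the extremal section $s_{r,s}$. Its orthogonality to $\mathcal{H}_0$ gives the exact identity $\llangle s_{r,s},s_{r,s}\rrangle_{\widetilde h}=\llangle s_{r,s},\xi\rrangle_{\widetilde h}$. The right-hand side is then expanded term by term using the $\mathbb{T}^n$-invariance of $P_{r,s,I}$. The one delicate contribution is the $(1,2)$-part of $\widetilde h$ paired with $u_{r,s}=s_{r,s}-\xi$. It is controlled by the separate deviation estimate of Lemma \ref{lem:deviation}, $\|u_{r,s}\|\lesssim \rho^2\|\xi\|$ with $\rho^2=r^2+s^2$, which yields an $O(\rho^5)$ error.

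\textbf{Your route.} You instead squeeze the infimum.
\begin{itemize}
\item Upper bound: the constant section gives $L_h\le V$, where $V$ is the mean of $|\xi|^2_h$ over the cylinder.
\item Lower bound: for an arbitrary competitor $\xi+g$, the $(1,1)$-cross term vanishes. The cleanest justification is that averaging over $z\mapsto e^{it}z$ turns $z_j\bar z_k\overline{g(z)}$ into $z_j\bar z_k\overline{g(0)}=0$; your wording of this step is muddled, though the conclusion is right.
\item The remaining cross term is therefore $O(\rho^3)\|g\|$, set against the coercive term $(1-C\rho^2)\|g\|^2$. Completing the square gives $L_h\ge V-C\rho^6$.
\end{itemize}

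\textbf{Comparison.} Your completing-the-square step does the work of the deviation lemma. You never need the existence or orthogonality of the minimizer, and you obtain the sharper statement $L_h=V+O(\rho^6)$. Your argument also shows that removing the pure terms $Q$ is inessential. With $Q\neq0$ the cross term is only $O(\rho^2)\|g\|$, but the resulting bound $V-C\rho^4$ still suffices; what really matters is $dH(0)=0$. In exchange, the paper's route produces a single exact formula for $L_h$ rather than two inequalities.

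\textbf{Uniformity.} As in the paper, it is worth noting that if the normal frame is fixed at $a$ before rotating by $A$, all your constants depend only on bounds for $h$ on a fixed ball. Hence the $O((r^2+s^2)^2)$ term is uniform in $A$ and $\xi$, and this uniformity is what the proof of Theorem \ref{thm:equivalence} uses.
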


Throughout this paper, we basically give results when $n\geq 2$.
However, all of them are valid in the $n=1$ case as well if we omit $s$ and $A$, change $P_{r,s,A}$ to $\Delta_r$ and $O((r^2+s^2)^2)$ to $O(r^4)$, and add some modifications if necessary.
We omit to explicitly state the theorems in the $n=1$ case to make them simple.

The point of Theorem \ref{mainthm:Griffithsexpansion} is that the Chern curvature appears exactly as the second-order coefficient of the $L^2$-extension index.
As a direct consequence, we obtain an equivalence between two-sided bounds of the curvature in the sense of Griffiths and two-sided asymptotic estimates of the index (Theorem \ref{thm:equivalence}), which generalizes \cite[Theorem 1.4]{Ina26} and \cite[Theorem 1.1]{LX24}.
In particular, not only the positivity but also the negativity of the curvature can be read off from the index.
Combining the expansion with Berndtsson's curvature formula \cite{Ber11}, we also obtain an expansion formula for the natural $L^2$-metric on a direct image sheaf (Corollary \ref{cor:berndtsson-directimage}).

We further introduce the $q$-$L^2$-extension index $L^{(q)}_h$ and prove the corresponding expansion (Theorem \ref{thm:qexpansion}).
This index can give a characterization of eigenvalue-sum conditions arising in uniform partial positivity in terms of $L^{(q)}_h$ (Theorem \ref{thm:qcharacterization}).
We also obtain a partial positivity version of the minimal/optimal $L^2$-extension property (Theorem \ref{thm:optimalqpositivity})
and a characterization of the flatness of a Hermitian metric by the condition $L^{(q)}_h \equiv 1$ for $1\leq q \leq n-1$ (Theorem \ref{thm:flat-q}).

The organization of the paper is as follows.
In Section \ref{sec:l2ext}, we prove Theorem \ref{mainthm:Griffithsexpansion} and discuss its applications.
In Section \ref{sec:qpositivity-flatness}, we introduce the $q$-$L^2$-extension index and study partial positivity and flatness.
In Appendix \ref{appendix}, we prove a lemma on holomorphic frames used in the proof of the main theorem.

\subsection*{Acknowledgment}\label{subsec-ack}
The author would like to thank Wang Xu for helpful comments.
He is supported by Grant-in-Aid for Early-Career Scientists $\sharp$23K12978 from Japan Society for the Promotion of Science (JSPS).

\section{$L^2$-extension}\label{sec:l2ext}

In this section, we give a proof of Theorem \ref{mainthm:Griffithsexpansion} and study its applications.

\begin{proof}[Proof of Theorem \ref{mainthm:Griffithsexpansion}]\label{proof:Griffithsexpansion}

	Put $x=z-a$, and choose once and for all a holomorphic frame
	around $a$, as in Lemma \ref{lem:frame}, such that
	\[
		h(a+x)
		=
		I-\left(\sum_{p,q}
		c_{p\bar q\lambda\bar\mu}x_p\bar x_q\right)_{\lambda,\mu}
		+Q^{2,1}(x)+Q^{1,2}(x)+ O(|x|^4),
	\]
	where $Q^{2,1}$ and $Q^{1,2}$ have bidegrees $(2,1)$ and
	$(1,2)$, respectively.
	For $A\in\mathbf U(n)$, consider the change of coordinates
	\[
		z=a+Aw=:\varphi(w), \hspace{5mm} \widetilde{h}(w):= \varphi^*h(w)=h(a+Aw)
	\]
	and pull back the above fixed frame.
	Since $A$ is unitary, $L_h(a,r,s,A,\xi)=L_{\widetilde{h}}(0,r,s,I,\xi)$.
	Then we get
	\begin{align*}
		\widetilde{h} = {} & {} I - \left( \sum_{j,k}\widetilde{c}_{j\bar{k}\lambda\bar{\mu}}w_j\bar{w}_k \right)_{\lambda,\mu}                                                                                                                                                \\
		                   & + \left( \sum_{j_1, j_2, k}q^{j_1j_2\bar{k}}_{\lambda\bar{\mu}}w_{j_1}w_{j_2}\bar{w}_k \right)_{\lambda, \mu} + \left( \sum_{j,k_1,k_2}r^{j\bar{k}_1\bar{k}_2}_{\lambda\bar{\mu}}w_j\bar{w}_{k_1}\bar{w}_{k_2} \right)_{\lambda, \mu} + O(|w|^4).
	\end{align*}
	The coefficients may depend on $A$, but again since $A$ is unitary, they are bounded uniformly in $A$.
	We take $\rho>0 $ such that
	\[
		(1-C_0|w|^2)I
		\leq \widetilde h(w)
		\leq (1+C_0|w|^2)I
	\]
	for $|w|<\rho$, with $C_0$ and $\rho$ independent of
	$A\in\mathbf U(n)$ and satisfying $C_0 \rho^2 < 1/2$.
	Here, the inequality $A\leq B$ for two Hermitian matrices $A$ and $B$ means that $B-A$ is positive semidefinite.
	After decreasing $\rho$ if necessary,
	we may also assume that the absolute value of each entry of $O(|w|^4)$ is bounded by $C|w|^4$ for some $C>0$, which is again uniform in $A\in\mathbf U(n)$.
	Afterwards, we take $r, s>0$ satisfying $r^2 + s^2 < \rho^2$.

	Let $\mathcal{H}_0 = \{ s\in A^2(P_{r,s,I},E; \widetilde{h}) \mid s(0) = 0 \}$.
	By standard Hilbert space theory, there exists a unique element $s_{r,s}\in A^2(P_{r,s,I},E; \widetilde{h})$ satisfying $s_{r,s}(0)= \xi$ and
	$$
		L_{\widetilde{h}}(0,r,s,I,\xi) = \frac{\int_{P_{r,s,I}} |s_{r,s}|^2_{\widetilde{h}}}{|P_{r,s,I}| |\xi|^2_{\widetilde{h}(0)}},
	$$
	where $s_{r,s}$ is orthogonal to $\mathcal{H}_0$.
	For $\widetilde{h}$, we write $\llangle f, g\rrangle_{\widetilde{h}}:= \int_{P_{r,s,I}} \langle f, g \rangle_{\widetilde{h}}$, $\langle f, g \rangle_{\widetilde{h}} = {}^t f \widetilde{h} \bar{g},$ and $s_{r,s} = \sum_{|\alpha|=0}^\infty \mathbf{a}_{\alpha, (r,s)} w^\alpha$, where each $\alpha=(\alpha_1, \ldots, \alpha_n)$ is a multi-index with $|\alpha|= \alpha_1 + \cdots + \alpha_n$ and $\mathbf{a}_{\alpha, (r,s)} \in \mathbb{C}^r$.
	Note that $s_{r,s}$, and therefore each coefficient of $s_{r,s}$, may depend on $r$ and $s$, and $\mathbf{a}_{0, (r,s)} = \xi$.
	From what we have discussed above, we obtain $\llangle s_{r,s}, s_{r,s} \rrangle_{\widetilde{h}} = \llangle s_{r,s}, \xi \rrangle_{\widetilde{h}}$ and
	\begin{align*}
		\llangle s_{r,s}, \xi \rrangle_{\widetilde{h}} & = \sum_{|\alpha|=0}^\infty \int_{P_{r,s,I}} {}^t \mathbf{a}_{\alpha, (r,s)} \widetilde{h} \bar{\xi} w^\alpha                                                                                                                                         \\
		                                               & = \sum_{|\alpha|=0}^\infty \int_{P_{r,s,I}} {}^t \mathbf{a}_{\alpha, (r,s)} \Bigg( I - \left( \sum_{j,k}\widetilde{c}_{j\bar{k}\lambda\bar{\mu}}w_j\bar{w}_k \right)
		+ \left( \sum_{j_1, j_2, k}q^{j_1j_2\bar{k}}_{\lambda\bar{\mu}}w_{j_1}w_{j_2}\bar{w}_k \right)                                                                                                                                                                                                        \\
		                                               & \phantom{{}={} \sum_{|\alpha|=0}^\infty \int_{P_{r,s,I}} {}^t \mathbf{a}_{\alpha, (r,s)} \Bigg( I {}} + \left( \sum_{j,k_1,k_2}r^{j\bar{k}_1\bar{k}_2}_{\lambda\bar{\mu}}w_j\bar{w}_{k_1}\bar{w}_{k_2} \right) + O(|w|^4) \Bigg) \bar{\xi} w^\alpha.
	\end{align*}
	Set
	\[
		I_{\alpha\beta}:= \int_{P_{r,s,I}} w^\alpha \bar{w}^\beta, \hspace{5mm} \mathcal{V}_{\alpha}(r,s):= \int_{P_{r,s,I}} |w^\alpha|^{2}.
	\]
	Let us consider the action $R_\theta: (w_1, \ldots, w_n) \mapsto (e^{\ai\theta_1}w_1, \ldots, e^{\ai\theta_n}w_n)$ for $\theta=(\theta_1, \ldots, \theta_n)\in \mathbb{T}^n$.
	Since $P_{r,s,I}$ is invariant under $R_\theta$ for $\theta\in \mathbb{T}^n$, we get
	\[
		I_{\alpha\beta} = e^{\ai\sum_i (\alpha_i-\beta_i)\theta_i} I_{\alpha\beta}.
	\]
	Hence, $I_{\alpha\beta} = 0$ if $\alpha \neq \beta$ and $I_{\alpha\alpha} = \mathcal{V}_\alpha(r,s)$.

	We compute each term on the right-hand side of the above expression of $\llangle s_{r,s}, \xi \rrangle_{\widetilde{h}}$.
	First we get
	\begin{equation}\label{eq:firstterm}
		\sum_{|\alpha|=0}^\infty \int_{P_{r,s,I}} {}^t \mathbf{a}_{\alpha, (r,s)} \bar{\xi} w^\alpha = \int_{P_{r,s,I}} {}^t \xi \bar{\xi} = |P_{r,s,I}| |\xi|^2.
	\end{equation}
	Next, we obtain
	\begin{align}
		\sum_{|\alpha|=0}^\infty \int_{P_{r,s,I}} {}^t \mathbf{a}_{\alpha, (r,s)} \left(\sum_{j,k}\widetilde{c}_{j\bar{k}\lambda\bar{\mu}}w_j\bar{w}_k\right)_{\lambda,\mu}  \bar{\xi} w^\alpha & = \sum_j \int_{P_{r,s,I}} {}^t \mathbf{a}_{0, (r,s)} \left(\widetilde{c}_{j\bar{j}\lambda\bar{\mu}}\right)_{\lambda,\mu} \bar{\xi}|w_j|^2           \\
		                                                                                                                                                                                        & =\sum_j \left( \mathcal{V}_{e_j} \sum_{\lambda,\mu} \widetilde{c}_{j\bar{j}\lambda\bar{\mu}} \xi_\lambda\bar{\xi}_\mu \right)\label{eq:secondterm}.
	\end{align}
	Here, $e_j$ is the multi-index with $1$ at the $j$-th entry and $0$ at the other entries, which means that
	$$
		\mathcal{V}_{e_j} = \int_{P_{r,s,I}} |w_j|^{2}.
	$$
	It is also clear that
	\begin{equation}\label{eq:thirdterm}
		\sum_{|\alpha|=0}^\infty \int_{P_{r,s,I}} {}^t \mathbf{a}_{\alpha, (r,s)} \left( \sum_{j_1, j_2, k}q^{j_1j_2\bar{k}}_{\lambda\bar{\mu}}w_{j_1}w_{j_2}\bar{w}_k \right)_{\lambda,\mu} \bar{\xi} w^{\alpha} =0.
	\end{equation}
	Then we compute the fourth term
	\[
		\int_{P_{r,s,I}} {}^t s_{r,s} \left( \sum_{j,k_1,k_2}r^{j\bar{k}_1\bar{k}_2}_{\lambda\bar{\mu}}w_j\bar{w}_{k_1}\bar{w}_{k_2} \right)_{\lambda,\mu} \bar{\xi}
	\]
	We set $R = \left( \sum_{j,k_1,k_2}r^{j\bar{k}_1\bar{k}_2}_{\lambda\bar{\mu}}w_j\bar{w}_{k_1}\bar{w}_{k_2} \right)_{\lambda,\mu}$ and $u_{r,s}:= s_{r,s} - \xi$.
	Since $R$ is of bidegree $(1,2)$, it follows that
	\begin{equation}\label{eq:bidegree12}
		\int_{P_{r,s,I}}{}^t s_{r,s} R \bar{\xi} = \int_{P_{r,s,I}}{}^t u_{r,s} R \bar{\xi}.
	\end{equation}
	The Cauchy--Schwarz inequality implies that
	\begin{equation}\label{eq:CS}
		\left| \int_{P_{r,s,I}} {}^tu_{r,s}R\bar{\xi} \right| \leq \left( \int_{P_{r,s,I}} |u_{r,s}|^2_{I} \right)^{1/2} \left( \int_{P_{r,s,I}} |R\bar{\xi}|^2_{I} \right)^{1/2}.
	\end{equation}
	From Lemma \ref{lem:deviation} below, we have
	\begin{equation}\label{eq:deviation}
		\int_{P_{r,s,I}} |u_{r,s}|^2_I \leq 4C_0^2 (r^2+s^2)^2 |P_{r,s,I}| |\xi|_I^2.
	\end{equation}
	It follows from the fact that $|R\bar{\xi}|\leq C' |w|^3|\xi|_I$ for some $C'>0$ that
	\begin{equation}\label{eq:Rxi}
		\int_{P_{r,s,I}} |R\bar{\xi}|^2_I \leq C'^2 |\xi|^2_I |P_{r,s,I}| (r^2+s^2)^3.
	\end{equation}
	Combining (\ref{eq:bidegree12}), (\ref{eq:CS}), (\ref{eq:deviation}) and (\ref{eq:Rxi}), we obtain
	\begin{equation}\label{eq:fourthterm}
		\left| \int_{P_{r,s,I}} {}^t s_{r,s} R \bar{\xi} \right| \leq C'' |\xi|^2_I |P_{r,s,I}| (r^2+s^2)^{5/2}
	\end{equation}
	for some $C''>0$.

	Finally, we can estimate the fifth term
	\[
		\int_{P_{r,s,I}} {}^t s_{r,s} O(|w|^4) \bar{\xi}.
	\]
	Using the Cauchy--Schwarz inequality again, we have
	\begin{align*}
		\left| \int_{P_{r,s,I}} {}^ts_{r,s} O(|w|^4) \bar{\xi}  \right| & \leq \int_{P_{r,s,I}} \left|  {}^ts_{r,s} O(|w|^4) \bar{\xi} \right|                                                     \\
		                                                                & \leq \left( \int_{P_{r,s,I}} |s_{r,s}|^2_I \right)^{1/2} \left( \int_{P_{r,s,I}} |O(|w|^4) \bar{\xi}|^2_I \right)^{1/2}.
	\end{align*}
	The minimality of $s_{r,s}$ implies that
	\begin{align*}
		\int_{P_{r,s,I}} |s_{r,s}|^2_I & \leq \frac{1}{1-C_0(r^2+s^2)} \int_{P_{r,s,I}} |s_{r,s}|^2_{\widetilde{h}} \\
		                               & \leq \frac{1}{1-C_0(r^2+s^2)} \int_{P_{r,s,I}} |\xi|^2_{\widetilde{h}}     \\
		                               & \leq \frac{1 + C_0 (r^2+s^2)}{1-C_0(r^2+s^2)} \int_{P_{r,s,I}} |\xi|^2_I   \\
		                               & = \frac{1 + C_0 (r^2+s^2)}{1-C_0(r^2+s^2)} |\xi|^2_I |P_{r,s,I}|.
	\end{align*}
	We can also see that
	\begin{equation*}
		\int_{P_{r,s,I}} |O(|w|^4) \bar{\xi}|^2_I  \leq C''' |\xi|^2_I |P_{r,s,I}| (r^2+s^2)^4.
	\end{equation*}
	Hence, we have that
	\begin{equation}\label{eq:fifthterm}
		\left| \int_{P_{r,s,I}} {}^ts_{r,s} O(|w|^4) \bar{\xi}  \right| \leq C'''' |\xi|^2_I |P_{r,s,I}| (r^2+s^2)^2.
	\end{equation}

	Combining (\ref{eq:firstterm}), (\ref{eq:secondterm}), (\ref{eq:thirdterm}), (\ref{eq:fourthterm}) and (\ref{eq:fifthterm}), we finally get
	\begin{align*}
		L_{\widetilde{h}}(0,r,s,I,\xi) = \frac{\int_{P_{r,s,I}}|s_{r,s}|^2_{\widetilde{h}}}{|P_{r,s,I}||\xi|^2_{\widetilde{h}(0)}} = 1- \frac{\sum_j \left( \mathcal{V}_{e_j} \sum_{\lambda,\mu} \widetilde{c}_{j\bar{j}\lambda\bar{\mu}} \xi_\lambda\bar{\xi}_\mu \right)}{|P_{r,s,I}||\xi|^2_{\widetilde{h}(0)}} + O((r^2+s^2)^2).
	\end{align*}
	Based on this, we convert this result into the expansion of $L_h(a,r,s,A,\xi)$ to complete the proof.
	Simple computation shows that
	\begin{equation*}
		\frac{\mathcal{V}_{e_1}}{|P_{r,s,I}|} = \frac{r^2}{2}, \hspace{10mm} \frac{\mathcal{V}_{e_\alpha}}{|P_{r,s,I}|} = \frac{s^2}{n}~(2\leq \alpha \leq n).
	\end{equation*}
	If we write $\Theta_h = \sum_{p,q}\Theta_{h, p\bar{q}}dz_p\wedge d\bar{z}_q$ and $\Theta_{\widetilde{h}} = \sum_{j,k}\Theta_{\widetilde{h}, j\bar{k}}dw_j\wedge d\bar{w}_k$, we have
	\begin{equation*}
		\Theta_{\widetilde{h}, j\bar{k}}(0) = \sum_{p,q} \Theta_{h, p\bar{q}}(a)A_{pj}\overline{A}_{qk},
	\end{equation*}
	where $A_{pj}$ is the $(p,j)$-entry of $A$.
	Using the identity
	\begin{equation*}
		\sum_{\lambda,\mu} \widetilde{c}_{j\bar{j}\lambda\bar{\mu}} \xi_\lambda\bar{\xi}_\mu = \langle \Theta_{\widetilde{h}, j\bar{j}}\xi, \xi \rangle_{\widetilde{h}(0)},
	\end{equation*}
	we obtain
	\begin{equation*}
		\langle \Theta_{\widetilde{h}, j\bar{j}}\xi, \xi \rangle_{\widetilde{h}(0)} = \langle \Theta_{h}(Ae_j, \overline{Ae_j})\xi, \xi \rangle_{h(a)}.
	\end{equation*}
	Hence, we can get
	\begin{equation*}
		L_{h}(a,r,s,A, \xi) = 1 - \frac{r^2}{2} \frac{\langle \Theta_h(Ae_1, \overline{Ae_1})\xi, \xi\rangle_{h(a)}}{|\xi|^2_{h(a)}} - \frac{s^2}{n} \sum_{j=2}^n \frac{\langle \Theta_h(Ae_j, \overline{Ae_j})\xi, \xi \rangle_{h(a)}}{|\xi|^2_{h(a)}} + O((r^2+s^2)^2).
	\end{equation*}
	Note that the estimate of the $O((r^2+s^2)^2)$ term is uniform in $A\in \mathbf{U}(n)$ and $\xi$.
\end{proof}

\begin{lemma}\label{lem:deviation}
	Let the notation be the same as above.
	If $s$ is the minimal-norm extension of $\xi$ on $P_{r,s,I}$ and $u:= s- \xi$, we have
	\[
		\int_{P_{r,s,I}} |u|^2_I \leq 4C_0^2 (r^2+s^2)^2 |P_{r,s,I}| |\xi|_I^2.
	\]
\end{lemma}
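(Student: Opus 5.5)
We will prove the bound through the orthogonality characterization of the minimal extension, $s\perp_{\widetilde h}\mathcal H_0$, compared against the flat metric $I$. The key observation is that $u=s-\xi\in\mathcal H_0$, because $u(0)=0$. Moreover, for the flat metric $I$ the constant section $\xi$ is orthogonal to $\mathcal H_0$. Indeed, for $f\in\mathcal H_0$ we have $\int_{P_{r,s,I}} {}^t f\,\bar\xi=0$, since every monomial $w^\alpha$ with $|\alpha|\ge1$ integrates to zero by the $\mathbb T^n$-invariance used above ($I_{\alpha 0}=0$). So $u$ measures how far the $\widetilde h$-minimizer is pushed away from the $I$-minimizer $\xi$, and this push comes only from $\widetilde h-I=O(|w|^2)$.

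Concretely, we start from the orthogonality $\llangle s,u\rrangle_{\widetilde h}=0$, that is, $\llangle u,u\rrangle_{\widetilde h}=-\llangle \xi,u\rrangle_{\widetilde h}$. Since $\llangle \xi,u\rrangle_I=0$, this gives
\[
\llangle u,u\rrangle_{\widetilde h}=-\int_{P_{r,s,I}} {}^t\xi\,(\widetilde h-I)\,\bar u .
\]
We then bound both sides. On the left, $\widetilde h\ge (1-C_0|w|^2)I\ge \tfrac12 I$ on $P_{r,s,I}$, because $|w|^2\le r^2+s^2<\rho^2$ and $C_0\rho^2<1/2$. Hence $\llangle u,u\rrangle_{\widetilde h}\ge \tfrac12\int|u|_I^2$. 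On the right, the two-sided bound $(1-C_0|w|^2)I\le\widetilde h\le(1+C_0|w|^2)I$ gives operator norm $\|\widetilde h-I\|\le C_0|w|^2\le C_0(r^2+s^2)$. Cauchy--Schwarz then yields
\[
\left|\int {}^t\xi(\widetilde h-I)\bar u\right|\le C_0(r^2+s^2)\,|\xi|_I\,|P_{r,s,I}|^{1/2}\Big(\int|u|_I^2\Big)^{1/2}.
\]

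Combining the two bounds gives $\big(\int|u|_I^2\big)^{1/2}\le 2C_0(r^2+s^2)|P_{r,s,I}|^{1/2}|\xi|_I$. Squaring yields exactly $4C_0^2(r^2+s^2)^2|P_{r,s,I}||\xi|_I^2$.

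The only points needing care are the following.
\begin{itemize}
\item The bound $\|\widetilde h-I\|\le C_0|w|^2$ must come from the matrix inequalities themselves. It does, since a Hermitian $B$ with $-cI\le B\le cI$ has operator norm at most $c$.
\item The constant factor $\tfrac12$ comes precisely from the assumption $C_0\rho^2<1/2$.
\item $u$ must be square integrable with respect to $I$, so that the pairings make sense. This follows from the equivalence of $\widetilde h$ and $I$ on the cylinder.
\end{itemize}
I do not expect a genuine obstacle here. The main subtle step is recognizing that the vanishing $\llangle\xi,u\rrangle_I=0$, which follows from rotational symmetry, is what lets us replace $\widetilde h$ by $\widetilde h-I$ in the orthogonality relation.
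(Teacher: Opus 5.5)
Your proof is correct and follows the paper's argument essentially step for step: the orthogonality $\llangle s,u\rrangle_{\widetilde h}=0$, the vanishing of $\int {}^t\xi\,\bar u$ by $\mathbb{T}^n$-invariance, Cauchy--Schwarz with the bound $C_0(r^2+s^2)$ on the operator norm of $\widetilde h-I$, and the lower bound on $\llangle u,u\rrangle_{\widetilde h}$. The only cosmetic difference is that you use $\widetilde h\ge \tfrac12 I$ directly, whereas the paper keeps the factor $1-C_0(r^2+s^2)$ and only at the end uses $\frac{1}{1-C_0(r^2+s^2)}<2$.
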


\begin{proof}[Proof of Lemma \ref{lem:deviation}]\label{proof:deviation}
	Since $u(0)=0$ and $s$ has minimal norm, $s$ is orthogonal to $u$ with respect to $\widetilde{h}$, that is,
	\[
		\llangle s, u\rrangle_{\widetilde{h}} = 0.
	\]
	Hence, writing $\widetilde{h}=I+(\widetilde{h}-I)$, we have
	\[
		\llangle u, u\rrangle_{\widetilde{h}} = - \llangle \xi, u\rrangle_{\widetilde{h}} = - \int_{P_{r,s,I}} {}^t \xi \bar{u} - \int_{P_{r,s,I}} {}^t \xi (\widetilde{h}-I) \bar{u}.
	\]
	The first term is zero due to the same reason as above.
	The Cauchy--Schwarz inequality gives the second term as
	\[
		\left| \int_{P_{r,s,I}} {}^t \xi (\widetilde{h}-I) \bar{u} \right| \leq C_0 |\xi|_I (r^2+s^2) \int_{P_{r,s,I}} |u|_I \leq C_0 |\xi|_I (r^2+s^2) |P_{r,s,I}|^{1/2} \left( \int_{P_{r,s,I}} |u|_I^2 \right)^{1/2}.
	\]
	On the other hand, since $\widetilde{h}\geq\left( 1-C_0(r^2+s^2) \right)I$ on $P_{r,s,I}$, we have
	\[
		\llangle u, u\rrangle_{\widetilde{h}} \geq \left( 1-C_0(r^2+s^2) \right) \int_{P_{r,s,I}} |u|_I^2.
	\]
	Combining these two inequalities, we get
	\[
		\left( \int_{P_{r,s,I}} |u|_I^2 \right)^{1/2} \leq \frac{C_0|\xi|_I}{1-C_0(r^2+s^2)}(r^2+s^2)|P_{r,s,I}|^{1/2},
	\]
	which implies the desired inequality since $\frac{1}{1-C_0(r^2+s^2)} < 2$.
\end{proof}

Note that this lemma is true whenever a domain is invariant under the action of $\mathbb{T}^n$ such as $\B(0;r), \B^k_r\times \B^{n-k}_s$, etc.

As a corollary of Theorem \ref{mainthm:Griffithsexpansion},
for example, we can immediately see that
\begin{equation*}
	\lim_{r, s\to +0} L_h(a,r,s,A,\xi) = 1,
\end{equation*}
(see Proposition 2.6 and 2.10 in \cite{Ina26}).
As another corollary,
we can get the following result, which is a generalization of \cite[Theorem 1.4]{Ina26} and \cite[Theorem 1.1]{LX24}, in a more organized way.

\begin{theorem}\label{thm:equivalence}
	Let $\omega = \ai \sum_j dz_j \wedge d\bar{z}_j$ be the standard K\"ahler form on $\Omega$, $\pi:E\to \Omega$ be a holomorphic vector bundle over $\Omega$ and $h$ be a smooth Hermitian metric on $E$.
	Fix $a\in\Omega$ and $c,d\in\mathbb R$ with $c\leq d$.
	Then the following two conditions are equivalent:
	\begin{enumerate}
		\item $c\omega \otimes  \Id_{E} \leq_{\mathrm{Grif.}}  \ai\Theta_{h} \leq_{\mathrm{Grif.}} d\omega \otimes \Id_{E}$ at $a$,
		\item for any $\varepsilon>0$, there exists $\rho_\varepsilon > 0$ such that
		      \[
			      1 - (d+\varepsilon)\mathfrak{d}(P_{r,s,A})^2 \leq L_h(a,r,s,A,\xi) \leq 1 - (c-\varepsilon)\mathfrak{d}(P_{r,s,A})^2
		      \]
		      for any $r,s>0$ with $r^2+s^2 < \rho_\varepsilon^2$, $A\in \mathbf{U}(n)$ and $\xi \in E_a \sasyugo \{0\}$, where
		      $\mathfrak{d}(P_{r,s,A})= \sqrt{\frac{1}{2}r^2 + \frac{n-1}{n}s^2}$ is the diameter of $P_{r,s,A}$ following the notation in \cite{LX24}.
	\end{enumerate}
\end{theorem}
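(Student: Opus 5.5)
The plan is to deduce both directions from Theorem \ref{mainthm:Griffithsexpansion}, using that its error term $O((r^2+s^2)^2)$ is uniform in $A\in\mathbf U(n)$ and in $\xi$, as noted at the end of its proof. Write $\theta_j(A,\xi):=\langle \ai\Theta_h(Ae_j,\overline{Ae_j})\xi,\xi\rangle_{h(a)}/|\xi|^2_{h(a)}$. I will use the sign convention in which condition (1) means $c\le \theta_j(A,\xi)\le d$ for every unit vector $Ae_j$ and every $\xi$. In that convention the expansion reads
\[
L_h(a,r,s,A,\xi)=1-\tfrac{r^2}{2}\theta_1-\tfrac{s^2}{n}\sum_{j=2}^n\theta_j+O((r^2+s^2)^2),
\]
and the $\ai$ is absorbed, consistently with the positive-curvature direction $L_h\le 1$ of \cite{Ina26}.

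For (1)$\Rightarrow$(2), I first observe that $\tfrac{r^2}{2}+\tfrac{s^2}{n}(n-1)=\mathfrak d(P_{r,s,A})^2$. Since each $\theta_j$ lies in $[c,d]$, the second-order term lies between $c\,\mathfrak d^2$ and $d\,\mathfrak d^2$. Next, $\mathfrak d^2\ge \min(\tfrac12,\tfrac{n-1}{n})(r^2+s^2)=\tfrac12(r^2+s^2)$ for $n\ge2$. Hence the error satisfies $|O((r^2+s^2)^2)|\le C(r^2+s^2)^2\le 2C(r^2+s^2)\mathfrak d^2$. Choosing $\rho_\varepsilon$ with $2C\rho_\varepsilon^2\le\varepsilon$ then gives the two-sided bound in (2), uniformly in $A$ and $\xi$.

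For (2)$\Rightarrow$(1), I fix an arbitrary unit vector $v\in\C^n$ and $\xi\neq0$, and pick $A\in\mathbf U(n)$ with $Ae_1=v$. I let $s\to0$ with $r$ fixed small, e.g. $s=r^2$, so that $\tfrac{s^2}{n}\sum_{j\ge2}\theta_j=O(r^4)$ and $\mathfrak d^2=\tfrac{r^2}{2}+O(r^4)$. The expansion then gives $L_h=1-\tfrac{r^2}{2}\theta_1(A,\xi)+O(r^4)$. Substituting into (2), subtracting 1, dividing by $r^2/2$ and letting $r\to0$ yields $c-\varepsilon\le\theta_1(A,\xi)\le d+\varepsilon$. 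Since $\varepsilon$ is arbitrary, $c\le\langle\ai\Theta_h(v,\bar v)\xi,\xi\rangle/|\xi|^2\le d$ for all $v$ and $\xi$, which is exactly the Griffiths two-sided bound (1) at $a$.

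The only delicate step is the direction (2)$\Rightarrow$(1): one has to isolate a single direction $v$, since (2) controls only the weighted average over all $j$. This is handled by the degeneration $s=r^2\to0$, which is admissible because (2) is assumed for all small $r,s>0$ independently. A second point to watch is that the sign convention relating $\Theta_h$ in Theorem \ref{mainthm:Griffithsexpansion} to $\ai\Theta_h$ in (1) matches as stated. Finally, the uniformity of the remainder in $A$ and $\xi$ is what makes $\rho_\varepsilon$ independent of them in (1)$\Rightarrow$(2).
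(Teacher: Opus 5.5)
Your proposal is correct and follows the paper's proof. For (1)$\Rightarrow$(2) you trap the second-order term of Theorem \ref{mainthm:Griffithsexpansion} between $c\,\mathfrak{d}(P_{r,s,A})^2$ and $d\,\mathfrak{d}(P_{r,s,A})^2$ and absorb the uniform remainder, which you usefully make explicit via $\mathfrak{d}(P_{r,s,A})^2\geq \frac12(r^2+s^2)$. For (2)$\Rightarrow$(1) you choose $A$ with $Ae_1=v$ and degenerate the cylinder; the only cosmetic difference is that you take $s=r^2$ with a single limit $r\to0$, whereas the paper takes $s=\eta r$ and lets first $r\to0$ and then $\eta\to0$, and both isolate the $Ae_1$-direction equally well.
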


\begin{proof}
	(1) $\Longrightarrow$ (2): Take $\varepsilon>0$, $A\in \mathbf{U}(n)$ and $\xi \in E_a \sasyugo \{0\}$.
	We have
	\[
		c \leq \frac{\langle \Theta_{h}(Ae_i, \overline{Ae_i}) \xi, \xi \rangle_{h(a)}}{|\xi|^2_{h(a)}} \leq d,
	\]
	which means that
	\[
		-\frac{c}{2}r^2 \geq -\frac{r^2}{2} \frac{\langle \Theta_{h}(Ae_1, \overline{Ae_1}) \xi, \xi \rangle_{h(a)}}{|\xi|^2_{h(a)}} \geq -\frac{d}{2}r^2, \hspace{3mm} -\frac{c}{n}s^2 \geq -\frac{s^2}{n} \frac{\langle \Theta_{h}(Ae_j, \overline{Ae_j}) \xi, \xi \rangle_{h(a)}}{|\xi|^2_{h(a)}} \geq -\frac{d}{n}s^2.
	\]
	Thanks to Theorem \ref{mainthm:Griffithsexpansion}, we can take $\rho_\varepsilon > 0$ such that for any $r,s>0$ with $r^2+s^2<\rho_\varepsilon^2$, $A\in \mathbf{U}(n)$ and $\xi \in E_a \sasyugo \{0\}$, we have
	\[
		1 - (d+\varepsilon)\mathfrak{d}(P_{r,s,A})^2 \leq L_h(a,r,s,A,\xi) \leq 1 - (c-\varepsilon)\mathfrak{d}(P_{r,s,A})^2
	\]
	(just compare $\mathfrak{d}(P_{r,s,A})^2$ with the $O((r^2+s^2)^2)$ term).
	Note that we can take such $\rho_\varepsilon$ since the estimates obtained in Theorem \ref{mainthm:Griffithsexpansion} are uniform in $A\in \mathbf{U}(n)$ and $\xi$.

	(2) $\Longrightarrow$ (1):
	To prove the inequalities in (1), we take an arbitrary $v\in \C^n$ and $\xi \in E_a \sasyugo \{0\}$.
	We can also assume that $v$ is a unit vector.
	Then we take a unitary matrix $A$ satisfying $Ae_1 = v$.
	For $\varepsilon>0$, we take $\rho_\varepsilon > 0$ such that the inequalities in (2) hold for $r, s>0$ with $r^2+s^2 < \rho_\varepsilon^2$.
	For sufficiently small $\eta>0$, we consider $s = \eta r$ and $P_{r,\eta r,A}$.
	Using the same argument as above, we can get the following inequalities
	\begin{align*}
		(c-\varepsilon)\mathfrak{d}(P_{r,\eta r,A})^2  \leq {} & {} \frac{r^2}{2} \frac{\langle \Theta_h(Ae_1, \overline{Ae_1})\xi, \xi\rangle_{h(a)}}{|\xi|^2_{h(a)}}                                          \\
		                                                       & + \frac{(\eta r)^2}{n} \sum_{j=2}^n \frac{\langle \Theta_h(Ae_j, \overline{Ae_j})\xi, \xi \rangle_{h(a)}}{|\xi|^2_{h(a)}} + O(r^4(1+\eta^2)^2) \\
		\leq                                               {}  & {} (d+\varepsilon)\mathfrak{d}(P_{r,\eta r,A})^2,
	\end{align*}
	which implies that
	\begin{align*}
		(c-\varepsilon)\left(\frac{1}{2} + \frac{n-1}{n}\eta^2 \right)  \leq {} & {} \frac{1}{2} \frac{\langle \Theta_h(Ae_1, \overline{Ae_1})\xi, \xi\rangle_{h(a)}}{|\xi|^2_{h(a)}}                                                    \\
		                                                                        & + \frac{\eta^2}{n} \sum_{j=2}^n \frac{\langle \Theta_h(Ae_j, \overline{Ae_j})\xi, \xi \rangle_{h(a)}}{|\xi|^2_{h(a)}} + \frac{O(r^4(1+\eta^2)^2)}{r^2} \\
		\leq                                               {}                   & {} (d+\varepsilon)\left(\frac{1}{2} + \frac{n-1}{n}\eta^2 \right).
	\end{align*}
	Taking the limit as $r \to 0$, we obtain
	\begin{align*}
		(c-\varepsilon)\left(\frac{1}{2} + \frac{n-1}{n}\eta^2 \right) & \leq \frac{1}{2} \frac{\langle \Theta_h(Ae_1, \overline{Ae_1})\xi, \xi\rangle_{h(a)}}{|\xi|^2_{h(a)}}
		+ \frac{\eta^2}{n} \sum_{j=2}^n \frac{\langle \Theta_h(Ae_j, \overline{Ae_j})\xi, \xi \rangle_{h(a)}}{|\xi|^2_{h(a)}}                                                               \\
		                                                               & \leq                                               (d+\varepsilon)\left(\frac{1}{2} + \frac{n-1}{n}\eta^2 \right).
	\end{align*}
	Taking the limit $\eta \to 0$ as well, we can derive from the above inequalities that
	\begin{equation*}
		(c-\varepsilon) \frac{1}{2} \leq \frac{1}{2} \frac{\langle \Theta_h(Ae_1, \overline{Ae_1})\xi, \xi\rangle_{h(a)}}{|\xi|^2_{h(a)}} \leq (d+\varepsilon) \frac{1}{2}.
	\end{equation*}
	Since $\varepsilon>0$ is arbitrary, we can get the desired inequalities.
\end{proof}

Strictly speaking, as the above proof suggests, we have equivalences between $c\omega \otimes  \Id_{E} \leq_{\mathrm{Grif.}}  \ai\Theta_{h}$ and $L_h(a,r,s,A,\xi) \leq 1 - (c-\varepsilon)\mathfrak{d}(P_{r,s,A})^2$ and between $ \ai\Theta_{h} \leq_{\mathrm{Grif.}} d\omega \otimes \Id_{E}$ and $1 - (d+\varepsilon)\mathfrak{d}(P_{r,s,A})^2 \leq L_h(a,r,s,A,\xi)$.
We remark that in \cite{Ina26}, \cite{LX24}, the authors only considered the equivalence between $c\omega \otimes  \Id_{E} \leq_{\mathrm{Grif.}}  \ai\Theta_{h}$ and $L_h(a,r,s,A,\xi) \leq 1 - (c-\varepsilon)\mathfrak{d}(P_{r,s,A})^2$, which means that there is a correlation between how sharp the $L^2$-extension is and how positive the curvature is.
Here we also prove in a more organized way that there is an equivalence between how non-sharp the $L^2$-extension is and how negative the curvature is.

As another application, combining the result in \cite{Ber11} and Theorem \ref{mainthm:Griffithsexpansion}, we get the expansion formula for a metric on a direct image sheaf.
Let $f:X\to B$ be a smooth proper K\"ahler fibration over a domain $B \subset \C$ and $(L, h=e^{-\varphi})$ be a smooth Hermitian line bundle over $X$ with $\ai\Theta_h \geq 0$.
We also assume that its restriction to every fiber is strictly positive: $\ai\Theta_h|_{X_b} > 0$ for all $b \in B$.
Set
\[
	E=f_*(K_{X/B}\otimes L)
\]
and equip $E$ with the natural $L^2$-metric $H$ defined by fiber integration with $h$.
Berndtsson's formula \cite[Theorem 1.2]{Ber11} states that the curvature of $H$ at $b\in B$ is given by
\[
	\langle \Theta_H u, u\rangle = \int_{X_b} c(\varphi) |u|^2 e^{-\varphi} + \langle (\square' +1)^{-1}\eta, \eta\rangle,
\]
where $u\in E_b \cong H^0(X_b, K_{X_b} \otimes L|_{X_b})$.
Here, we do not dive into the notation and the details since this is not the main topic of this paper.
If you are interested, please refer to \cite{Ber11} for more details.
Theorem \ref{mainthm:Griffithsexpansion} gives the following expansion formula.

\begin{corollary}\label{cor:berndtsson-directimage}
	Under the above setting, we obtain for $b\in B$ and $u\in E_b\sasyugo \{ 0\}$
	\begin{equation*}
		L_H(b,r,u) = 1 - \frac{r^2}{2 \| u\|^2_{H_b}} \left\lbrace \int_{X_b} c(\varphi) |u|^2 e^{-\varphi} + \langle (\square' +1)^{-1}\eta, \eta\rangle \right\rbrace + O (r^4).
	\end{equation*}
\end{corollary}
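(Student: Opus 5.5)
The plan is to apply Theorem \ref{mainthm:Griffithsexpansion} directly in the one-dimensional base case $n=1$, with the holomorphic vector bundle $E=f_*(K_{X/B}\otimes L)$ over $B\subset\C$ and the smooth Hermitian metric $H$. As remarked after the main theorem, the $n=1$ version has no $s$ or $A$. The cylinder $P_{r,s,A}$ is replaced by the disc $\Delta_r$, and the error term is $O(r^4)$. In this form the expansion reads
\[
L_H(b,r,u)=1-\frac{r^2}{2}\frac{\langle \Theta_H(\partial_t,\overline{\partial_t})u,u\rangle_{H_b}}{\|u\|^2_{H_b}}+O(r^4),
\]
where $t$ is the coordinate on $B$.

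\textbf{Step 1: check the hypotheses.} The first step is to verify that the main theorem applies. Since $f$ is smooth and proper and $\ai\Theta_h|_{X_b}>0$ on each fiber, Kodaira--Nakano vanishing makes $h^0(X_b,K_{X_b}\otimes L|_{X_b})$ constant in $b$. By Grauert's theorem, $E$ is then a holomorphic vector bundle. Berndtsson \cite{Ber11} shows that the $L^2$-metric $H$ is smooth in this setting, which is all Theorem \ref{mainthm:Griffithsexpansion} requires.

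\textbf{Step 2: re-run the $n=1$ proof.} I would go through the proof of the main theorem with $n=1$ to confirm the modifications. Lemma \ref{lem:frame} still gives a normal frame with $H=I-c\,|t|^2I'+Q^{2,1}+Q^{1,2}+O(|t|^4)$. The disc $\Delta_r$ is $\mathbb T^1$-invariant, so the orthogonality $I_{\alpha\beta}=0$ for $\alpha\neq\beta$ and Lemma \ref{lem:deviation} still hold. The only value that changes is
\[
\frac{\mathcal V_{e_1}}{|\Delta_r|}=\frac{1}{\pi r^2}\int_{\Delta_r}|t|^2=\frac{r^2}{2}.
\]
All error terms remain bounded by $C(r^2)^2=Cr^4$.

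\textbf{Step 3: substitute the curvature.} Finally I would insert Berndtsson's formula \cite[Theorem 1.2]{Ber11} for $\langle\Theta_H u,u\rangle$, namely the fiber integral of $c(\varphi)|u|^2e^{-\varphi}$ plus $\langle(\square'+1)^{-1}\eta,\eta\rangle$. This yields exactly the stated expansion.

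\textbf{Main obstacle.} The hardest point is convention matching rather than analysis. One must check that $\langle\Theta_H(\partial_t,\overline{\partial_t})u,u\rangle$ in the main theorem is the same quantity Berndtsson calls $\langle\Theta_H u,u\rangle$. This means matching the factor $\ai$ and the sign convention for the curvature coefficient $c_{1\bar1\lambda\bar\mu}$ in Lemma \ref{lem:frame}. It also means confirming that $|u|^2e^{-\varphi}$ denotes the fiberwise $(n,n)$-form, so that $\|u\|^2_{H_b}=\int_{X_b}|u|^2e^{-\varphi}$ is the normalization in the denominator. I would settle this by testing on a product family. There both sides must vanish, since $\eta=0$ and $c(\varphi)=0$ for $\varphi$ pulled back from the fiber, and $L_H\equiv1$ for a flat $H$. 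A further check is to twist by $e^{-k|t|^2}$ and compare the $-r^2k/2$ shift on both sides.
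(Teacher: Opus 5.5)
Your proposal is correct and follows the paper's route: the corollary is just the $n=1$ case of Theorem \ref{mainthm:Griffithsexpansion} applied to $(E,H)$ over $B\subset\C$, with Berndtsson's curvature formula \cite[Theorem 1.2]{Ber11} substituted for $\langle\Theta_H u,u\rangle$. Your extra checks (local freeness of $E$, smoothness of $H$, and matching the curvature sign and normalization conventions) fill in details the paper leaves implicit.
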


What we want to emphasize here is not the novelty of the result, which is just a combination of Berndtsson’s result and ours, but the fact that this formula shows, in a clear and quantitative way, the correlation between the sharpness of the $L^2$-extension, the positivity of the curvature, and the infinitesimal variation of the fibration, as measured by the Kodaira--Spencer term.

\section{$q$-positivity and flatness}\label{sec:qpositivity-flatness}
\subsection{$q$-positivity}
In this subsection, we study $q$-positivity in terms of the $L^2$-extension.
For an integer $1\leq q \leq n$, we consider the domain $P^{(q)}_{r,s,A} := A(\B^q_r\times \B^{n-q}_s)$ for $r, s>0$, so that $P^{(1)}_{r,s,A} = P_{r,s,A}$.

\begin{definition}\label{def:l2indexq}
	Let $\pi:E\to \Omega$ be a holomorphic vector bundle over $\Omega\subset \C^n$ and $h$ be a smooth Hermitian metric on $E$.
	For an integer $1\leq q \leq n$, we set $\Omega_{P^{(q)}}= \{ (a,r,s,A)\in \Omega \times \R_{>0}\times \R_{>0}\times\mathbf{U}(n) \mid a + P^{(q)}_{r,s,A}\subset \Omega \}$.
	We define {\it the $q$-$L^2$-extension index} $L^{(q)}_h$ of $h$ on $\Omega_{P^{(q)}}\times_\Omega E\sasyugo \{0\}$ by
	$$
		L^{(q)}_h(a,r,s,A,\xi)=\inf\left\{ \frac{\int_{a+P^{(q)}_{r,s,A}}|\sigma|^2_h}{|P^{(q)}_{r,s,A}| |\xi|^2_{h(a)}} ~\middle|~ \sigma\in A^2(a+P^{(q)}_{r,s,A},E; h), \sigma(a)=\xi \right\}.
	$$
\end{definition}

By definition, $L^{(1)}_h = L_h$. Using this notion, we obtain the following expansion formula, which is a generalization of Theorem \ref{mainthm:Griffithsexpansion}.

\begin{theorem}\label{thm:qexpansion}
	For fixed $a\in \Omega$ and $\xi \in E_a\sasyugo \{0\}$, we have the following expansion
	\begin{align*}
		L^{(q)}_{h}(a,r,s,A, \xi) = 1 & - \frac{r^2}{q+1} \sum_{j=1}^{q} \frac{\langle \Theta_h(Ae_j, \overline{Ae_j})\xi, \xi\rangle_{h(a)}}{|\xi|^2_{h(a)}}                                 \\
		                              & - \frac{s^2}{n-q+1} \sum_{j=q+1}^n \frac{\langle \Theta_h(Ae_j, \overline{Ae_j})\xi, \xi \rangle_{h(a)}}{|\xi|^2_{h(a)}} + O\left((r^2+s^2)^2\right).
	\end{align*}
	When $q=n$, we use the convention $\B_s^0=\{0\}$. In this case $P^{(n)}_{r,s,A}=\B_r^n$, the second sum is empty, and the remainder term can be written as $O(r^4)$.
\end{theorem}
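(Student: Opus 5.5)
The plan is to rerun the proof of Theorem \ref{mainthm:Griffithsexpansion} verbatim with $P_{r,s,I}$ replaced by $P^{(q)}_{r,s,I}=\B^q_r\times\B^{n-q}_s$. The only properties of the domain used there are two. First, it is invariant under the torus action $R_\theta$, so that $I_{\alpha\beta}=0$ for $\alpha\neq\beta$. Second, it is contained in the ball of radius $\sqrt{r^2+s^2}$, so that $|w|^2\le r^2+s^2$ on it. Both hold for $\B^q_r\times\B^{n-q}_s$, and in the case $q=n$ we have $|w|\le r$. As remarked after Lemma \ref{lem:deviation}, that lemma holds for any $\mathbb{T}^n$-invariant domain. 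Hence the same bound $\int|u|^2_I\le 4C_0^2(r^2+s^2)^2|P^{(q)}_{r,s,I}||\xi|^2_I$ is available.

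Concretely, I proceed in the following order.
\begin{enumerate}
\item Use the frame from Lemma \ref{lem:frame} and the unitary change of coordinates $z=a+Aw$. This reduces $L^{(q)}_h(a,r,s,A,\xi)$ to $L^{(q)}_{\widetilde h}(0,r,s,I,\xi)$, with constants $C_0,\rho$ uniform in $A$.
\item Take the minimal extension $s_{r,s}\perp\mathcal H_0$, and write $L=\llangle s_{r,s},\xi\rrangle_{\widetilde h}/(|P^{(q)}||\xi|^2)$.
\item Expand $\widetilde h$ into its five pieces:
\begin{itemize}
\item the constant term gives $|P^{(q)}||\xi|^2$, by the mean value property, i.e.\ the torus orthogonality;
\item the $(1,1)$ term gives $\sum_j\mathcal V_{e_j}\sum\widetilde c_{j\bar j\lambda\bar\mu}\xi_\lambda\bar\xi_\mu$;
\item the $(2,1)$ term vanishes;
\item the $(1,2)$ term is $O((r^2+s^2)^{5/2})|P^{(q)}||\xi|^2$, by Cauchy--Schwarz combined with Lemma \ref{lem:deviation};
\item the $O(|w|^4)$ term is $O((r^2+s^2)^2)$, using the bound $\int|s_{r,s}|^2_I\le\frac{1+C_0(r^2+s^2)}{1-C_0(r^2+s^2)}|\xi|^2|P^{(q)}|$ obtained from minimality.
\end{itemize}
\item Finally, convert $\widetilde c_{j\bar j}$ into $\langle\Theta_h(Ae_j,\overline{Ae_j})\xi,\xi\rangle_{h(a)}$ exactly as before. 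Note that $|\xi|_{\widetilde h(0)}=|\xi|_I$, and that all constants are uniform in $A$ and $\xi$.
\end{enumerate}

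The only new computation is the normalized second moments, namely
\[
\frac{\mathcal V_{e_j}}{|P^{(q)}_{r,s,I}|}=\frac{1}{|\B^q_r|}\int_{\B^q_r}|w_j|^2=\frac{r^2}{q+1}\quad(1\le j\le q),
\qquad
\frac{\mathcal V_{e_j}}{|P^{(q)}_{r,s,I}|}=\frac{s^2}{n-q+1}\quad(q<j\le n).
\]
These follow from the product structure together with the standard formula $\frac{1}{|\B^m_\rho|}\int_{\B^m_\rho}|w|^2=\frac{m}{m+1}\rho^2$ on $\C^m$. By symmetry, each coordinate then contributes $\frac{\rho^2}{m+1}$. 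To verify the formula, use polar coordinates: $\int_0^\rho t^{2m+1}\,dt\,/\int_0^\rho t^{2m-1}\,dt=\frac{m}{m+1}\rho^2$. This recovers $r^2/2$ and $s^2/n$ when $q=1$. When $q=n$ the factor $\B^0_s$ is a point, the integrals are over $\B^n_r$ alone, and every error bound uses $|w|\le r$, which gives $O(r^4)$.

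I expect no serious obstacle, since each estimate transfers directly. The point needing the most care is checking that Lemma \ref{lem:deviation} and the vanishing of the $(2,1)$ term truly use only $\mathbb T^n$-invariance. They do: both rely on $\int w^\alpha\bar w^\beta=0$ for $\alpha\ne\beta$, applied to $\int{}^t\xi\,\bar u$ and $\int w^\alpha\,\overline{w^\beta}$ terms. A second point is that the Cauchy--Schwarz bounds remain uniform when $r$ and $s$ are of very different sizes. This holds because all bounds are stated in terms of $r^2+s^2$.
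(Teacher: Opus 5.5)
Your proposal is correct and follows the paper's route. The paper also omits the details as a verbatim rerun of the proof of Theorem~\ref{mainthm:Griffithsexpansion} on $P^{(q)}_{r,s,I}=\B^q_r\times\B^{n-q}_s$, recording only the normalized second moments $r^2/(q+1)$ and $s^2/(n-q+1)$, which you compute correctly. Your explicit check that the argument uses only $\mathbb{T}^n$-invariance and the bound $|w|^2\le r^2+s^2$ (or $|w|\le r$ when $q=n$) makes precise what the paper leaves implicit.
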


\begin{proof}\label{proof:qexpansion}
	Since the proof is almost the same as that of Theorem \ref{mainthm:Griffithsexpansion}, we omit it.
	We only remark that the coefficients are again the mean values $\mathcal{V}_{e_j}/|P^{(q)}_{r,s,I}|$ of $|w_j|^2$ over $P^{(q)}_{r,s,I} = \B^q_r\times \B^{n-q}_s$, which are computed as follows:
	\begin{align*}
		\frac{\mathcal{V}_{e_j}}{|P^{(q)}_{r,s,I}|} & = \frac{1}{|\B^q_r|}\int_{\B^q_r} |w_j|^2 = \frac{r^2}{q+1} ~ (1\leq j \leq q),             \\
		\frac{\mathcal{V}_{e_j}}{|P^{(q)}_{r,s,I}|} & = \frac{1}{|\B^{n-q}_s|}\int_{\B^{n-q}_s} |w_j|^2 = \frac{s^2}{n-q+1} ~ (q+1\leq j \leq n).
	\end{align*}
\end{proof}

Note that the above estimates are also uniform in $A$ and $\xi$.
Using this theorem, we can give a characterization of partial positivity. In the line bundle case, this gives eigenvalue-sum conditions arising in uniform partial positivity.
Here we say $\varphi$ or $e^{-\varphi}$ is uniformly $(q-1)$-(semi)positive if the sum of any $q$ eigenvalues of $\ai\ddbar\varphi$, counted with multiplicity, is (semi)positive (see \cite{AG62}, \cite[Definition 2.1]{Yan19}).

\begin{theorem}\label{thm:qcharacterization}
	Let $\varphi$ be a smooth function on $\Omega$, $1\leq q\leq n$, and $c, d\in \R$ with $c\leq d$ and fix $a\in\Omega$.
	Then the following conditions are equivalent:
	\begin{enumerate}
		\item the sum of any $m$ eigenvalues of $\ai\ddbar\varphi(a)$ is greater than or equal to $cm$ and less than or equal to $dm$, where $m=\min\{q, n-q\}$ when $1\leq q \leq n-1$ and $m=n$ when $q=n$.
		\item for any $\varepsilon>0$, there exists $\rho_\varepsilon>0$ such that
		      $$
			      1-(d+\varepsilon)\mathfrak{d}^{(q)}(P^{(q)}_{r,s,A})^2 \leq L^{(q)}_{e^{-\varphi}}(a,r,s,A) \leq 1-(c-\varepsilon)\mathfrak{d}^{(q)}(P^{(q)}_{r,s,A})^2
		      $$
		      for any $r, s>0$ with $r^2+s^2<\rho_\varepsilon^2$, $A\in \mathbf{U}(n)$, where $\mathfrak{d}^{(q)}(P^{(q)}_{r,s,A}) = \sqrt{\frac{q}{q+1}r^2+\frac{n-q}{n-q+1}s^2}$ is the $q$-diameter of $P^{(q)}_{r,s,A}$.
	\end{enumerate}
\end{theorem}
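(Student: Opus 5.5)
The plan is to read both conditions off the expansion in Theorem \ref{thm:qexpansion}. For the trivial line bundle with $h=e^{-\varphi}$ we have $\Theta_h=\ddbar\varphi$, so the $\xi$-dependence drops out. Let $H$ be the Hermitian form $H(v)=\sum_{p,q}\varphi_{p\bar q}(a)v_p\bar v_q$ and put $v_j=Ae_j$. Then Theorem \ref{thm:qexpansion} reads
\begin{equation*}
L^{(q)}_{e^{-\varphi}}(a,r,s,A)=1-\frac{r^2}{q+1}S_1(A)-\frac{s^2}{n-q+1}S_2(A)+O\left((r^2+s^2)^2\right),
\end{equation*}
where $S_1(A)=\sum_{j\le q}H(v_j)$ and $S_2(A)=\sum_{j>q}H(v_j)$. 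The remainder is uniform in $A$.

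\textbf{Step 1: reduce (2) to bounds on $S_1$ and $S_2$.} Since $\mathfrak{d}^{(q)}(P^{(q)}_{r,s,A})^2=\frac{q}{q+1}r^2+\frac{n-q}{n-q+1}s^2$, I will show that (2) is equivalent to
\begin{equation*}
cq\le S_1(A)\le dq \quad\text{and}\quad c(n-q)\le S_2(A)\le d(n-q) \quad\text{for all } A\in\mathbf U(n). \tag{$\ast$}
\end{equation*}

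For $(\ast)\Rightarrow(2)$, multiply the bounds by $r^2/(q+1)$ and $s^2/(n-q+1)$ and add. This gives $c\,\mathfrak{d}^{(q)2}\le \frac{r^2}{q+1}S_1+\frac{s^2}{n-q+1}S_2\le d\,\mathfrak{d}^{(q)2}$. The error term is $O((r^2+s^2)^2)$, and $(r^2+s^2)^2\le C\,\mathfrak{d}^{(q)4}$ when $q\le n-1$. So the error is absorbed by $\varepsilon\,\mathfrak{d}^{(q)2}$ for small $\rho_\varepsilon$, uniformly in $A$, exactly as in Theorem \ref{thm:equivalence}.

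For $(2)\Rightarrow(\ast)$, I follow the proof of Theorem \ref{thm:equivalence}. Set $s=\eta r$, divide by $r^2$ and let $r\to0$, then $\eta\to0$; this gives $cq\le S_1\le dq$ after $\varepsilon\to0$. Symmetrically, set $r=\eta s$ and divide by $s^2$ to get the bounds on $S_2$. When $q=n$ only $S_1$ occurs.

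\textbf{Step 2: convert $(\ast)$ into eigenvalue sums.} Let $\lambda_1\le\dots\le\lambda_n$ be the eigenvalues of $\ai\ddbar\varphi(a)$. As $A$ ranges over $\mathbf U(n)$, $\{v_1,\dots,v_q\}$ ranges over all orthonormal $q$-frames, and $\{v_{q+1},\dots,v_n\}$ over all orthonormal $(n-q)$-frames. By the Ky Fan min/max principle, $\min_A S_1=\lambda_1+\dots+\lambda_q$ and $\max_A S_1=\lambda_{n-q+1}+\dots+\lambda_n$. Hence $(\ast)$ says: every sum of $q$ eigenvalues lies in $[cq,dq]$, and every sum of $n-q$ eigenvalues lies in $[c(n-q),d(n-q)]$.

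\textbf{Step 3: reduce to $m$-sums.} It remains to show that $(\ast)$ is equivalent to ``every $m$-sum lies in $[cm,dm]$'', which is condition (1). One direction is immediate, since $m\in\{q,n-q\}$ (and $m=n=q$ when $q=n$). For the other, replace $\lambda_i$ by $\lambda_i-c$ for the lower bound and by $d-\lambda_i$ for the upper bound. The key fact is that the average of the $k$ smallest numbers is nondecreasing in $k$. So if every $m$-sum is $\ge0$, then the average of the $m$ smallest is $\ge0$. Hence for every $k\ge m$, in particular $k=\max\{q,n-q\}$, the sum of the $k$ smallest is $\ge0$, and therefore every $k$-sum is $\ge0$.

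I expect the main obstacle to be this bookkeeping of Steps 2 and 3, namely seeing why $m=\min\{q,n-q\}$ is the correct parameter, rather than any analysis, since all analytic content is already contained in Theorem \ref{thm:qexpansion}. The other point needing care is the uniformity of the error in $A$ when comparing with $\mathfrak{d}^{(q)2}$.
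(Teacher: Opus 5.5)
Your proposal is correct and follows essentially the same route as the paper: it combines the expansion of Theorem \ref{thm:qexpansion}, the $s=\eta r$ (and $r=\eta s$) limiting argument from Theorem \ref{thm:equivalence}, Ky Fan's principle, and the elementary fact that bounds on all $m$-fold eigenvalue sums carry over to all $M$-fold sums for $M\geq m$. The differences are cosmetic: you prove that last fact through the monotonicity of averages of the $k$ smallest eigenvalues instead of the paper's double-counting in Lemma \ref{lem:subsetsum}, and for $(2)\Rightarrow(1)$ you use Ky Fan's min/max rather than plugging in an eigenvector frame as the paper does.
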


Here we note that we do not need to specify $\xi\in E$ since we consider the line-bundle case.

\begin{proof}\label{proof:qcharacterization}
	(1) $\Longrightarrow$ (2):
	Take $\varepsilon>0, A\in \mathbf{U}(n)$.
	Assume first that $1\leq q\leq n-1$.
	From Lemma \ref{lem:subsetsum} below, we can say that the sum of any $q$ eigenvalues lies in $[cq,dq]$, and the sum of any $n-q$ eigenvalues lies in $[c(n-q),d(n-q)]$.
	By Ky Fan's principle (\cite{Fan49}), we obtain
	\[
		cq \leq \sum_{i=1}^{q} (\ddbar \varphi)(a)(Ae_i, \overline{Ae_i}) \leq dq,
	\]
	\[
		c(n-q) \leq \sum_{j=q+1}^{n} (\ddbar \varphi)(a)(Ae_j, \overline{Ae_j}) \leq d(n-q),
	\]
	which give
	\[
		-\frac{cq}{q+1}r^2 \geq - \frac{r^2}{q+1} \sum_{i=1}^{q} (\ddbar \varphi)(a)(Ae_i, \overline{Ae_i})\geq -\frac{dq}{q+1} r^2,
	\]
	\[
		-\frac{c(n-q)}{n-q+1}s^2 \geq - \frac{s^2}{n-q+1} \sum_{j=q+1}^{n} (\ddbar \varphi)(a)(Ae_j, \overline{Ae_j})\geq -\frac{d(n-q)}{n-q+1} s^2.
	\]
	Thanks to Theorem \ref{thm:qexpansion} and the same argument as in the proof of Theorem \ref{thm:equivalence}, we can get the conclusion of (2).
	When $q=n$, condition (1) gives $cn\leq\sum_{i=1}^n(\partial\bar\partial\varphi)(Ae_i,\overline{Ae_i})(a)\leq dn$. The conclusion follows from the $O(r^4)$-expansion in Theorem \ref{thm:qexpansion}.

	(2) $\Longrightarrow$ (1):
	Let $\varepsilon>0$ and $\lambda_1 \leq \cdots \leq \lambda_n$ be the eigenvalues of $\ai\ddbar\varphi(a)$.
	Choose an arbitrary subset $I=\{ i_1, \ldots, i_q\}$ of $\{1, \ldots, n\}$, write $\{ i_{q+1}, \ldots, i_n\} = \{1, \ldots, n\}\sasyugo I$, and let $A$ be a unitary matrix such that $Ae_\ell$ is an eigenvector corresponding to $\lambda_{i_\ell}$ for $\ell = 1, \ldots, n$.
	Then we have
	\begin{align*}
		1 -(d+\varepsilon)\mathfrak{d}^{(q)}(P^{(q)}_{r,s,A})^2
		 & \leq 1 - \frac{r^2}{q+1}\sum_{\ell=1}^q \lambda_{i_\ell} - \frac{s^2}{n-q+1}\sum_{\ell=q+1}^n \lambda_{i_\ell} + O((r^2+s^2)^2) \\
		 & \leq 1-(c-\varepsilon)\mathfrak{d}^{(q)}(P^{(q)}_{r,s,A})^2
	\end{align*}
	thanks to Theorem \ref{thm:qexpansion}.
	For sufficiently small $\eta>0$, we consider $s=\eta r$ and $P^{(q)}_{r,\eta r,A}$.
	Then we get
	\begin{align*}
		(c-\varepsilon)\left(\frac{q}{q+1}r^2+\frac{n-q}{n-q+1}(\eta r)^2 \right)
		 & \leq \frac{r^2}{q+1}\sum_{\ell=1}^q \lambda_{i_\ell} + \frac{\eta^2r^2}{n-q+1}\sum_{\ell=q+1}^n \lambda_{i_\ell} + O(r^4(1+\eta^2)^2) \\
		 & \leq (d+\varepsilon)\left(\frac{q}{q+1}r^2+\frac{n-q}{n-q+1}(\eta r)^2 \right).
	\end{align*}
	Repeating the same argument as in the proof of Theorem \ref{thm:equivalence}, we can obtain
	\[
		cq \leq \sum_{\ell=1}^q \lambda_{i_\ell} \leq dq.
	\]

	If $1\leq q\leq n-1$, interchanging the roles of $r$ and $s$ and repeating the same process with $r=\eta s$, we can also say that the sum of any $n-q$ eigenvalues of $\ai\ddbar\varphi(a)$, counted with multiplicity, is greater than or equal to $c(n-q)$ and less than or equal to $d(n-q)$.

	When $q=n$, we can apply the same argument and get the conclusion of (1).
\end{proof}

\begin{lemma}\label{lem:subsetsum}
	Let $\lambda_1, \ldots, \lambda_n \in \R$, $c \leq d$, and $1 \leq m \leq M \leq n$.
	If the sum of any $m$ distinct $\lambda_i$'s lies in $[cm, dm]$,
	then the sum of any $M$ distinct $\lambda_i$'s lies in $[cM, dM]$.
\end{lemma}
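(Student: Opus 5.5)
It suffices to show that the average of the $M$ chosen values lies in $[c,d]$. The idea is to realize this average as an average of averages over $m$-element subsets, each of which lies in $[c,d]$ by hypothesis. The proof is a double-counting argument.

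Fix a subset $J\subset\{1,\dots,n\}$ with $|J|=M$, and let $\mathcal{S}$ denote the family of all $m$-element subsets $S\subset J$. Since $m\leq M$, $\mathcal{S}$ is nonempty, with $|\mathcal{S}|=\binom{M}{m}$. We sum the hypothesis over $S\in\mathcal{S}$:
\[
	cm\binom{M}{m}\leq \sum_{S\in\mathcal{S}}\sum_{i\in S}\lambda_i\leq dm\binom{M}{m}.
\]
Interchanging the order of summation, each index $i\in J$ belongs to exactly $\binom{M-1}{m-1}$ of the sets $S$. The middle term therefore equals $\binom{M-1}{m-1}\sum_{i\in J}\lambda_i$.

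We then use the identity $m\binom{M}{m}=M\binom{M-1}{m-1}$ and divide by the positive number $\binom{M-1}{m-1}$. This yields
\[
	cM\leq\sum_{i\in J}\lambda_i\leq dM,
\]
which is the claim. The only point requiring attention is the counting step, namely checking that every $i\in J$ is counted equally often. It is immediate, so there is no real obstacle.
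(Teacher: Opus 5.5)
Your proof is correct and follows the paper's own argument. You fix an $M$-element set, sum the hypothesis over its $\binom{M}{m}$ subsets of size $m$, observe that each index is counted $\binom{M-1}{m-1}$ times, and conclude using $m\binom{M}{m}=M\binom{M-1}{m-1}$.
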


\begin{proof}
	For $T \subset \{1, \ldots, n\}$ with $|T| = M$, summing over all
	$S \subset T$ with $|S| = m$ gives
	$\sum_{S} \sum_{i \in S} \lambda_i = \binom{M-1}{m-1} \sum_{i \in T} \lambda_i$,
	since each $i \in T$ belongs to exactly $\binom{M-1}{m-1}$ such subsets.
	As each inner sum lies in $[cm, dm]$ and
	$\binom{M}{m} m = \binom{M-1}{m-1} M$, the claim follows.
\end{proof}

\begin{remark}\label{rem:qvectorbundle}
	Applying the same argument as in the proof of Theorem \ref{thm:qcharacterization}, we obtain the following type of result
	$$
		cm\leq\sum_{i=1}^m\frac{\langle\Theta_h(Ae_i,\overline{Ae_i})\xi,\xi\rangle_{h(a)}}{|\xi|_{h(a)}^2}\leq dm
	$$
	for every $\xi\in E_a\setminus\{0\}$ and $A\in\mathbf U(n)$ if  $1-(d+\varepsilon)\mathfrak{d}^{(q)}(P^{(q)}_{r,s,A})^2 \leq L^{(q)}_{h}(a,r,s,A, \xi) \leq 1-(c-\varepsilon)\mathfrak{d}^{(q)}(P^{(q)}_{r,s,A})^2$, where $m=\min\{q,n-q\}$ for $1\leq q\leq n-1$, and $m=n$ for $q=n$. The same argument also gives
	$$
		cq\leq\sum_{i=1}^q\frac{\langle\Theta_h(Ae_i,\overline{Ae_i})\xi,\xi\rangle_{h(a)}}{|\xi|_{h(a)}^2}\leq dq,
	$$
	which will be used in the next subsection.
	Here, however, we omit the details.
\end{remark}

For the same reason as in the proof of Theorem \ref{thm:equivalence}, the condition
$1-(d+\varepsilon)\mathfrak{d}^{(q)}(P^{(q)}_{r,s,A})^2 \leq L^{(q)}_{e^{-\varphi}}(a,r,s,A)$
corresponds to the sum of any $m$ eigenvalues of $\ai\ddbar\varphi(a)$, counted with multiplicity, being less than or equal to $dm$, and the condition
$L^{(q)}_{e^{-\varphi}}(a,r,s,A) \leq 1-(c-\varepsilon)\mathfrak{d}^{(q)}(P^{(q)}_{r,s,A})^2$
corresponds to the sum of any $m$ eigenvalues of $\ai\ddbar\varphi(a)$, counted with multiplicity, being greater than or equal to $cm$.
Therefore, taking $c=0$ and applying the above argument,
we obtain the following $(m-1)$-positivity version of the minimal/optimal $L^2$-extension property introduced in \cite{HPS18},\cite{DNW21},\cite{DNWZ23}.

\begin{theorem}\label{thm:optimalqpositivity}
	Keep the same setting as before.
	Assume that $L^{(q)}_{e^{-\varphi}}(a,r,s,A)\leq 1$ for any $(a,r,s,A)\in\Omega_{P^{(q)}}$.
	Then $\varphi$ is uniformly $(m-1)$-semipositive, where $m=\min\{q,n-q\}$ when $1\leq q\leq n-1$ and $m=n$ when $q=n$.
\end{theorem}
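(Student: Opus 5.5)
The plan is to reduce Theorem \ref{thm:optimalqpositivity} to the lower-bound half of Theorem \ref{thm:qcharacterization} with $c=0$.

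Fix $a\in\Omega$. By hypothesis, $L^{(q)}_{e^{-\varphi}}(a,r,s,A)\leq 1$ for every admissible $(r,s,A)$. This includes all $r,s>0$ with $r^2+s^2$ small, since $a+P^{(q)}_{r,s,A}\subset\Omega$ as soon as $r^2+s^2<\operatorname{dist}(a,\partial\Omega)^2$. Hence for every $\varepsilon>0$ we have
\[
	L^{(q)}_{e^{-\varphi}}(a,r,s,A)\leq 1 \leq 1+\varepsilon\,\mathfrak{d}^{(q)}(P^{(q)}_{r,s,A})^2 = 1-(0-\varepsilon)\,\mathfrak{d}^{(q)}(P^{(q)}_{r,s,A})^2 ,
\]
which is the upper inequality of condition (2) of Theorem \ref{thm:qcharacterization} with $c=0$.

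We cannot quote the theorem verbatim, because condition (2) also requires a lower bound with some $d$. As the paragraph after Remark \ref{rem:qvectorbundle} explains, however, the two one-sided bounds decouple. So I would rerun the (2) $\Longrightarrow$ (1) argument using only the upper inequality. Let $\lambda_1,\dots,\lambda_n$ be the eigenvalues of $\ai\ddbar\varphi(a)$. Pick a subset $I=\{i_1,\dots,i_q\}$ and a unitary $A$ sending $e_\ell$ to an eigenvector for $\lambda_{i_\ell}$. Theorem \ref{thm:qexpansion} then gives
\[
	\frac{r^2}{q+1}\sum_{\ell=1}^q\lambda_{i_\ell}+\frac{s^2}{n-q+1}\sum_{\ell=q+1}^n\lambda_{i_\ell}+O\bigl((r^2+s^2)^2\bigr)\geq -\varepsilon\,\mathfrak{d}^{(q)}(P^{(q)}_{r,s,A})^2 .
\]
Set $s=\eta r$, divide by $r^2$, let $r\to0$, then let $\eta\to0$, and finally let $\varepsilon\to0$. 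This yields $\sum_{\ell=1}^q\lambda_{i_\ell}\geq 0$ for every choice of $q$ indices.

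When $1\leq q\leq n-1$, I would repeat the argument with $r=\eta s$. This gives $\sum_{\ell=q+1}^n\lambda_{i_\ell}\geq0$ for every choice of $n-q$ indices. In particular, the sum of any $m=\min\{q,n-q\}$ eigenvalues is nonnegative, because it is a sum of either $q$ or $n-q$ eigenvalues, according to which of the two equals $m$. When $q=n$ we get $\sum_i\lambda_i\geq0$ from the $O(r^4)$ expansion, and here $m=n$. Since $a$ was arbitrary, $\varphi$ is uniformly $(m-1)$-semipositive.

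The main obstacle is justifying the one-sided use of Theorem \ref{thm:qcharacterization}. The fix is to redo the limiting argument ourselves, as above. That argument requires only that the $O((r^2+s^2)^2)$ remainder in Theorem \ref{thm:qexpansion} be uniform in $A$, so that $A$ can be fixed while $r\to0$; this uniformity is noted right after that theorem.
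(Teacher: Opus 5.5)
Your proposal is correct and follows the paper's route. The paper derives Theorem \ref{thm:optimalqpositivity} by noting that the one-sided bound $L^{(q)}_{e^{-\varphi}}\leq 1-(c-\varepsilon)\mathfrak{d}^{(q)}(P^{(q)}_{r,s,A})^2$ alone yields the lower eigenvalue-sum bound, and then taking $c=0$; this is exactly your one-sided rerun of the (2)$\Longrightarrow$(1) argument, including the $r=\eta s$ step for the $(n-q)$-sums.

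Two minor remarks. Since $A$ and $\eta$ are held fixed while $r\to 0$, you only need the remainder to be $O((r^2+s^2)^2)$ for that fixed $A$, not uniformly in $A$. The auxiliary $\varepsilon$ can also be dropped, because $L^{(q)}_{e^{-\varphi}}\leq 1$ already gives the inequality with $\varepsilon=0$.
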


If we set $q=1$, this recovers the minimal/optimal $L^2$-extension property, which states that if $L_{e^{-\varphi}}(a,r,s,A)\leq 1$, then $\varphi$ is plurisubharmonic.

\subsection{Flatness}

Many researchers have studied the relationship between the flatness of a Hermitian metric and the equality part of the optimal $L^2$-extension property (see \cite{Ina23}, \cite{Ina26}, \cite{LX24}).
Those studies show that the metric being flat is equivalent to the $L^2$-extension index being equal to $1$.
In this context, it is natural and interesting to ask whether we can characterize the flatness of a Hermitian metric in terms of the $q$-$L^2$-extension index.
In this subsection, we give an answer to this question.

\begin{theorem}\label{thm:flat-q}
	Keep the same setting as in Theorem \ref{thm:qexpansion} and assume $1\leq q\leq n-1$.
	Then the following are equivalent:
	\begin{enumerate}[font=\upshape]
		\item $\ai\Theta_h\equiv 0$.
		\item $L^{(q)}_h \equiv 1$.
	\end{enumerate}
\end{theorem}

\begin{proof}
	(1) $\Longrightarrow$ (2): It follows from the argument in \cite[Theorem 5.1]{LX24}.

	(2) $\Longrightarrow$ (1): Taking $c=d=0$ and considering Theorem \ref{thm:qcharacterization} and Remark \ref{rem:qvectorbundle}, we get for $\xi\in E_a\sasyugo \{ 0\}$ and $A\in \mathbf{U}(n)$
	\[
		\sum_{j=1}^{q} H_\xi(Ae_j) = 0,
	\]
	where $H_\xi(v) = \frac{\langle \Theta_h(v, \overline{v})\xi, \xi\rangle_{h(a)}}{|\xi|^2_{h(a)}}$.
	Take arbitrary unit vectors $u, w$.
	If $u$ and $w$ are linearly dependent, $H_\xi(u)=H_\xi(w)$.
	If they are linearly independent, we can choose orthonormal vectors $(v_2, \ldots, v_q)$ that are orthogonal to $u$ and $w$ since $q-1\leq n-2$.
	We take unitary matrices $A_u, A_w$ satisfying
	\[
		A_u(e_1)=u, A_u(e_j)=v_j,
	\]
	\[
		A_w(e_1)=w, A_w(e_j)=v_j
	\]
	for $2\leq j\leq q$.
	Then we have
	\[
		H_\xi(u) + \sum_{j=2}^{q} H_\xi(v_j) = 0,
	\]
	\[
		H_\xi(w) + \sum_{j=2}^{q} H_\xi(v_j) = 0.
	\]
	Hence $H_\xi(u) = H_\xi(w)$.
	Since $u$ and $w$ are arbitrary, we conclude that $H_\xi(v)$ is constant for any unit vector $v$, which also means $H_\xi(v)=0$.
	Since $H_\xi(v)=0$ for every $v$ and every $\xi$, polarization in $\xi$ first gives $\Theta_h(v, \bar{v})=0$.
	Then polarization in $v$ again gives the conclusion (1).
\end{proof}

As the proof suggests, the assumption $1\leq q\leq n-1$ plays an essential role.

\appendix
\section{}
\label{appendix}

\begin{lemma}\label{lem:frame}
	Let $\Omega$ be a domain in $\C^n$, $\pi:E\to \Omega$ be a holomorphic vector bundle of rank $r$ over $\Omega$, and $h$ be a smooth Hermitian metric on $E$.
	For every $a\in \Omega$ and every coordinate system $(z_j)_j$ centered at $a$, there exists a holomorphic frame $\{e_\alpha\}_{\alpha=1}^r$ of $E$ on a neighborhood of $a$ such that
	\begin{align*}
		\langle e_\lambda, e_\mu \rangle_{h} & = \delta_{\lambda \mu} - \sum_{j,k} c_{j\bar{k}\lambda\bar{\mu}} z_j\bar{z}_k + \sum_{j_1, j_2, k} q^{j_1j_2\bar{k}}_{\lambda\bar{\mu}}z_{j_1}z_{j_2}\bar{z}_k + \sum_{j, k_1, k_2} r^{j\bar{k}_1\bar{k}_2}_{\lambda\bar{\mu}} z_j \bar{z}_{k_1} \bar{z}_{k_2} + O(|z|^4),
	\end{align*}
	where $(c_{j\bar{k}\lambda\bar{\mu}})$ are the coefficients of the Chern curvature tensor $\Theta_h$ at $a$ and $q^{j_1j_2\bar{k}}_{\mu\bar{\lambda}} = \overline{r^{k\bar{j}_1\bar{j}_2}_{\lambda\bar{\mu}}}$.
\end{lemma}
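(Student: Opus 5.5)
The plan is to build the frame in two steps. First take a holomorphic frame that is orthonormal at $a$. Then correct it by a holomorphic change of frame so that the first-order terms of $h$ vanish. The remaining second-order coefficients of mixed type will then be identified with the Chern curvature.

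\emph{Step 1 (normalization at $a$).} Start from any holomorphic frame $\{f_\alpha\}$ near $a$. Apply a constant Gram--Schmidt transformation so that $H(z):=(\langle f_\lambda,f_\mu\rangle_h)$ satisfies $H(0)=I$. Taylor expand the smooth matrix $H$ at $0$ to third order in $z,\bar z$:
\[
H = I + \sum_j (a_j z_j + a_j^* \bar z_j) + H^{2,0}+H^{1,1}+H^{0,2} + (\text{cubic terms}) + O(|z|^4).
\]
Hermitian symmetry forces the $\bar z_j$-coefficient to be $a_j^*$, and similarly $H^{0,2}=(H^{2,0})^*$ in the appropriate sense.

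\emph{Step 2 (killing the holomorphic and antiholomorphic parts).} Replace the frame by $e=g\cdot f$, where $g(z)=I+\sum_j g_jz_j+\sum_{j,k}g_{jk}z_jz_k+(\text{cubic holomorphic})$ is a holomorphic matrix. The new Gram matrix is $g H g^*$, with the convention matching ${}^tf h\bar g$. Its pure-type parts in degrees $(1,0)$, $(2,0)$ and $(3,0)$ involve only the holomorphic coefficients of $g$ together with known terms of lower order. Solve order by order.
\begin{itemize}
\item Choose $g_j=-a_j$ to kill the $(1,0)$ part, and by conjugate symmetry the $(0,1)$ part.
\item Choose the quadratic part of $g$ to kill the $(2,0)$ and $(0,2)$ parts.
\item Choose the cubic part of $g$ to kill the $(3,0)$ and $(0,3)$ parts.
\end{itemize}
Each equation is of the form $G^{k,0}=(\text{known})$, so it is solvable with no obstruction. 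The new Gram matrix then has the stated shape $I-(c\,z\bar z)+Q^{2,1}+Q^{1,2}+O(|z|^4)$. Hermitian symmetry of the Gram matrix gives $Q^{1,2}=(Q^{2,1})^*$, which is exactly the relation $q^{j_1j_2\bar k}_{\mu\bar\lambda}=\overline{r^{k\bar j_1\bar j_2}_{\lambda\bar\mu}}$ after matching indices.

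\emph{Step 3 (identifying the curvature).} In a holomorphic frame the Chern connection matrix is $\theta=\partial H\cdot H^{-1}$ (transposed according to the paper's convention), and $\Theta=\bar\partial\theta$. Since $dH(0)=0$, we get $\Theta(0)=\bar\partial\partial H(0)$. Its $dz_j\wedge d\bar z_k$ coefficient is $-\partial_j\bar\partial_k H(0)$, which by construction is $c_{j\bar k\lambda\bar\mu}$ up to the sign convention. The pure $(2,0)$ terms would not contribute to this anyway, but we have killed them.

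The main obstacle is purely bookkeeping: keeping the transpose and conjugation conventions of $\langle f,g\rangle={}^tf h\bar g$ consistent so that the sign and index order of $c_{j\bar k\lambda\bar\mu}$ agree with $\Theta_h$ as used in the proof of Theorem~\ref{mainthm:Griffithsexpansion}. I would fix this by checking the rank-one case $h=e^{-\varphi}$ with $\varphi=\sum c_{j\bar k}z_j\bar z_k$, where $h\approx 1-\varphi$ and $\Theta=\ddbar\varphi$.
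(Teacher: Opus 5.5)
Your proposal is correct and is essentially the paper's argument. The paper quotes Demailly's Proposition~12.10 for the normalization up to order two, including the identification of the $z\bar z$-coefficients with the Chern curvature; Demailly's proof of that result is precisely your Steps~1--3 at linear and quadratic order. The paper then removes the $(3,0)$ and $(0,3)$ terms with the cubic holomorphic change of frame $e'_\lambda=e_\lambda-\sum_{|\gamma|=3}\sum_k z^\gamma p^\gamma_{\lambda\bar k}e_k$, which is your recursion at $k=3$, so the only difference is that you carry out all three orders yourself in a single gauge $g$.
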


\begin{proof}
	By \cite[Proposition 12.10, Chapter V]{Dem-agbook}, we can take a holomorphic frame $\{e_\lambda\}_{\lambda=1}^r$ of $E$ on a neighborhood of $a$ such that
	\begin{align*}
		\langle e_\lambda, e_\mu \rangle_{h} = \delta_{\lambda \mu} - \sum_{j,k} c_{j\bar{k}\lambda\bar{\mu}} z_j\bar{z}_k & + \sum_{j_1, j_2, j_3} p^{j_1 j_2 j_3}_{\lambda\bar{\mu}}z_{j_1}z_{j_2}z_{j_3} + \sum_{j_1, j_2, k} q^{j_1j_2\bar{k}}_{\lambda\bar{\mu}}z_{j_1}z_{j_2}\bar{z}_k \\                                                                                                                               &+ \sum_{j, k_1, k_2} r^{j\bar{k}_1\bar{k}_2}_{\lambda\bar{\mu}} z_j \bar{z}_{k_1} \bar{z}_{k_2} + \sum_{k_1, k_2, k_3} s^{\bar{k}_1\bar{k}_2\bar{k}_3}_{\lambda\bar{\mu}} \bar{z}_{k_1}\bar{z}_{k_2}\bar{z}_{k_3} + O(|z|^4).
	\end{align*}
	Hermitian symmetry $\langle e_\lambda, e_\mu \rangle_h = \overline{\langle e_\mu, e_\lambda \rangle}_h$ implies that
	\[
		p^{j_1j_2j_3}_{\mu\bar{\lambda}} = \overline{s^{\bar{j}_1\bar{j}_2\bar{j}_3}_{\lambda\bar{\mu}}}, \hspace{5mm} q^{j_1j_2\bar{k}}_{\mu\bar{\lambda}} = \overline{r^{k\bar{j}_1\bar{j}_2}_{\lambda\bar{\mu}}}.
	\]
	We define a new frame $e'_\lambda$ by
	$$
		e'_\lambda = e_\lambda - \sum_{|\gamma| = 3}\sum_{k} z^\gamma p^\gamma_{\lambda\bar{k}}e_k.
	$$
	Here the multi-index sum $\sum_{|\gamma| = 3}$ runs over $\gamma = (\gamma_1, \ldots, \gamma_n)\in \Z^n_{\geq 0}$ with $|\gamma| := \gamma_1 + \cdots + \gamma_n = 3$, $z^\gamma = z_1^{\gamma_1}\cdots z_n^{\gamma_n}$, and $p^\gamma_{\lambda\bar{\mu}}$ denotes the coefficient of $z^\gamma$.
	Then we have
	\begin{align*}
		\langle e'_\lambda, e'_\mu \rangle_{h}  = {} & {} \langle e_\lambda, e_\mu \rangle_{h} - \langle \sum_{|\gamma|=3}\sum_k z^\gamma p^\gamma_{\lambda\bar{k}}e_k, e_\mu  \rangle_h      - \langle e_\lambda, \sum_{|\delta|=3}\sum_\ell z^\delta p^\delta_{\mu\bar{\ell}}e_\ell \rangle_h \\
		                                             & + \langle \sum_{|\gamma|=3}\sum_k z^\gamma p^\gamma_{\lambda\bar{k}}e_k,  \sum_{|\delta|=3}\sum_\ell z^\delta p^\delta_{\mu\bar{\ell}}e_\ell \rangle_h.
	\end{align*}
	The second term on the right-hand side is equal to
	\begin{align*}
		\langle \sum_{|\gamma|=3}\sum_k z^\gamma p^\gamma_{\lambda\bar{k}}e_k, e_\mu \rangle_h & = \sum_{|\gamma|=3}\sum_k z^\gamma p^\gamma_{\lambda\bar{k}} \langle e_k, e_\mu \rangle_h          \\
		                                                                                       & =  \sum_{|\gamma|=3}\sum_k \left( z^\gamma p^\gamma_{\lambda\bar{k}}\delta_{k\mu}+ O(|z|^5)\right) \\
		                                                                                       & = \sum_{|\gamma|=3} z^\gamma p^\gamma_{\lambda\bar{\mu}} + O(|z|^5).
	\end{align*}
	Similarly, we get
	$$
		\langle e_\lambda, \sum_{|\delta|=3}\sum_\ell z^\delta p^\delta_{\mu\bar{\ell}}e_\ell \rangle_h =\sum_{|\delta|=3} \bar{z}^\delta \overline{p^{\delta}_{\mu\bar{\lambda}}} + O(|z|^5) =\sum_{|\delta|=3} \bar{z}^\delta s^{\bar{\delta}}_{\lambda\bar{\mu}} + O(|z|^5).
	$$
	It follows that
	\begin{align*}
		\langle e'_\lambda, e'_\mu \rangle_{h} & = \delta_{\lambda \mu} - \sum_{j,k} c_{j\bar{k}\lambda\bar{\mu}} z_j\bar{z}_k + \sum_{j_1, j_2, k} q^{j_1j_2\bar{k}}_{\lambda\bar{\mu}}z_{j_1}z_{j_2}\bar{z}_k + \sum_{j, k_1, k_2} r^{j\bar{k}_1\bar{k}_2}_{\lambda\bar{\mu}} z_j \bar{z}_{k_1} \bar{z}_{k_2} + O(|z|^4).
	\end{align*}
\end{proof}








\begin{thebibliography}{99}
	\bibitem[AG62]{AG62}	A. Andreotti and H. Grauert, \emph{Th\'eor\`eme de finitude pour la cohomologie des espaces complexes}, Bull. Soc. Math. France \textbf{90} (1962), 193--259.
	\bibitem[Ber11]{Ber11} B.~Berndtsson, \emph{Strict and nonstrict positivity of direct image bundles}, Math. Z. \textbf{269} (2011), 1201--1218.
	\bibitem[B\l o13]{Blo13} Z.~B\l ocki, \emph{Suita conjecture and the Ohsawa-Takegoshi extension theorem}, Invent. Math. \textbf{193} (2013), 149--158.
	\bibitem[Dem-book]{Dem-agbook} J.-P. Demailly, \emph{Complex analytic and differential geometry},
	\newline
	\url{http://www-fourier.ujf-grenoble.fr/~demailly/manuscripts/agbook.pdf}.
	\bibitem[DNW21]{DNW21} F.~Deng, J.~Ning, and Z.~Wang, \emph{Characterizations of plurisubharmonic functions}, Sci. China Math. \textbf{64} (2021), 1959--1970.
	\bibitem[DNWZ23]{DNWZ23} F.~Deng, J.~Ning, Z.~Wang, and X.~Zhou, \emph{Positivity of holomorphic vector bundles in terms of $L^p$-estimates for $\overline{\partial}$}, Math. Ann. \textbf{385} (2023), 575--607. 
	\bibitem[Fan49]{Fan49} K.~Fan, \emph{On a theorem of Weyl concerning eigenvalues of linear transformations. I}, Proc. Nat. Acad. Sci. U. S. A. \textbf{35} (1949), 652--655.
	\bibitem[GZ15]{GZ15} Q.~Guan and X.~Zhou, \emph{A solution of an $L^2$ extension problem with an optimal estimate and applications}, Ann. of Math. (2) \textbf{181} (2015), no.~3, 1139--1208.
	\bibitem[HPS18]{HPS18} C.~Hacon, M.~Popa, and C.~Schnell, \emph{Algebraic fiber spaces over Abelian varieties: around a recent theorem by Cao and Paun}, Contemp. Math. \textbf{712} (2018), 143--195.
	\bibitem[Hos19]{Hos19} G.~Hosono, \emph{On sharper estimates of Ohsawa--Takegoshi $L^2$-extension theorem}, J. Math. Soc. Japan \textbf{71} (2019), no.~3, 909--914.
	\bibitem[Ina23]{Ina23} T.~Inayama, \emph{A note on characterizing pluriharmonic functions via the Ohsawa--Takegoshi extension theorem}, J. Math. Sci. Univ. Tokyo \textbf{30} (2023), no.~3, 365--369.
	\bibitem[Ina26]{Ina26} T.~Inayama, \emph{$L^2$-extension indices, sharper estimates and curvature positivity}, Annales de l'Institut Fourier \textbf{76} (2026), no.~4, 1401--1429.
	\bibitem[Kik23]{Kik22} S.~Kikuchi, \emph{On sharper estimates of Ohsawa--Takegoshi $L^2$-extension theorem in higher dimensional case}, manuscripta math. \textbf{170} (2023), 453--469.
	\bibitem[LX24]{LX24} Z.~Liu and W.~Xu, \emph{Characterizations of Griffiths positivity, pluriharmonicity and flatness}, J. Funct. Anal. \textbf{287} (2024), no. 7, 110532.
	\bibitem[OT87]{OT87} T.~Ohsawa and K.~Takegoshi, \emph{On the extension of $L^2$ holomorphic functions}, Math. Z. \textbf{195}, (1987), no.~2, 197--204.
	\bibitem[Yan19]{Yan19} X. Yang, \emph{A partial converse to the Andreotti-Grauert theorem}, Compos. Math. \textbf{155} (2019), 89--99.
\end{thebibliography}
\end{document}